\DeclareMathOperator*{\argmin}{argmin}
\newtheorem{proposition}{Proposition}
\begin{document}

\title{A Scenario-Spatial Decomposition Approach With a Performance Guarantee for the Combined Bidding of Cascaded Hydropower and Renewables}

\author{
Luca Santosuosso,~\IEEEmembership{Graduate Student Member,~IEEE,}
Simon Camal,~\IEEEmembership{Member,~IEEE,}
Arthur Lett,
Guillaume Bontron,
Jalal Kazempour,~\IEEEmembership{Senoir Member,~IEEE,}
and Georges Kariniotakis,~\IEEEmembership{Senoir Member,~IEEE}
\thanks{L. Santosuosso, S. Camal and G. Kariniotakis are with the PERSEE Center of MINES Paris PSL University, 06904 Sophia-Antipolis, France (e-mail: luca.santosuosso@minesparis.psl.eu; simon.camal@minesparis.psl.eu; georges.kariniotakis@minesparis.psl.eu).

J. Kazempour is with the Technical University of Denmark,
2800 Kgs. Lyngby, Denmark (e-mail: jalal@dtu.dk).

G. Bontron and A. Lett are with the Compagnie Nationale du Rhône, 69004 Lyon, France (e-mail: g.bontron@cnr.tm.fr; a.lett@cnr.tm.fr).}
\thanks{}}

\markboth{}
{Shell \MakeLowercase{\textit{et al.}}: A Sample Article Using IEEEtran.cls for IEEE Journals}

\maketitle

\begin{abstract}
This study develops a scalable co-optimization strategy for the joint bidding of cascaded hydropower, wind, and solar energy units, treated as a unified entity in the day-ahead market. Although hydropower flexibility can manage the stochasticity of renewable energy, the underlying bidding problem is complex due to intricate coupling constraints and nonlinear dynamics. A decomposition in both scenario and spatial dimensions is proposed, enabling the use of distributed optimization. The proposed distributed algorithm is eventually a heuristic due to non-convexities arising from the system's physical dynamics. To ensure a performance guarantee, trustworthy upper and lower bounds on the global optimum are derived, and a mathematical proof is provided to demonstrate their existence and validity. This approach reduces the average runtime by up to 35\% compared to alternative distributed methods and by 57\% compared to the centralized optimization. Moreover, it consistently delivers solutions, whereas both centralized and alternative distributed approaches fail as the size of the optimization problem grows.
\end{abstract}

\begin{IEEEkeywords}
Trustworthy bounds, distributed optimization, stochastic programming, cascaded hydropower, bidding strategy.
\end{IEEEkeywords}

\section*{Nomenclature}
\noindent \textit{Indices and Sets}
\vspace{0.1cm}\\
\begin{tabular}{@{}l@{\hspace{0.2cm}}l@{}}
$\boldsymbol{N}$ & Set of hydropower plants, indexed by $n$\\
$\boldsymbol{S}^{\boldsymbol{\mathrm{oc}}}_n$ & \parbox[t]{8cm}{Set of inflow segments for the piecewise linear approximation of asset $n$'s operational curve, indexed by $i$}\\
$\boldsymbol{T}$ & Set of time steps, indexed by $t$\\
$\boldsymbol{\Omega}$ & Set of scenarios, indexed by $\omega$\\
$k$ & Index for iteration of the distributed algorithm
\end{tabular}

\vspace{0.1cm}
\noindent \textit{Physical Constants}
\vspace{0.1cm}\\
\begin{tabular}{@{}l@{\hspace{0.2cm}}l@{}}
$g$ & Acceleration due to gravity (m/s$^2$)\\
$w$ & Water density (kg/m$^3$)\\
\end{tabular}

\vspace{0.1cm}
\noindent \textit{Modeling Parameters}
\vspace{0.1cm}\\
\begin{tabular}{@{}l@{\hspace{0.2cm}}l@{}}
$\overline{H}_n$ & Maximum hydraulic head of asset $n$ (m)\\
$\underline{H}_n$ & Minimum hydraulic head of asset $n$ (m)\\
$P_{\omega}$ & Probability associated with scenario $\omega$ (-)\\
$\overline{P}^{\mathrm{H}}_{n}$ & Capacity of asset $n$ (MW)\\ 
$\hat{P}^{\mathrm{S}}_{\omega,t}$ & Solar farm output in scenario $\omega$ at time $t$ (MW)\\
$\hat{P}^{\mathrm{W}}_{\omega,t}$ & Wind farm output in scenario $\omega$ at time $t$ (MW)
\end{tabular}

\noindent \begin{tabular}{@{}l@{\hspace{0.2cm}}l@{}}
$\underline{Q}^{\mathrm{br}}_n$ & \parbox[t]{7.4cm}{Minimum barrage discharge of asset $n$ (m$^3$/s)}\\
$\hat{Q}^{\mathrm{ext}}_{n,\omega,t}$ & \parbox[t]{7.5cm}{Forecast of external inflow (from river and its tributaries) to asset $n$ in scenario $\omega$ at time $t$ (m$^3$/s)}\\
$Q^{\mathrm{in}}_{n,i}$ & \parbox[t]{7.4cm}{$i$-th inflow segment of asset $n$ (m$^3$/s)}\\
$\overline{Q}^{\mathrm{tr}}_n$ & \parbox[t]{7.4cm}{Maximum turbines discharge of asset $n$ (m$^3$/s)}\\
$\underline{Q}^{\mathrm{tr}}_n$ & \parbox[t]{7.4cm}{Minimum turbines discharge of asset $n$ (m$^3$/s)}\\
$S_{n,\omega,t}$ & \parbox[t]{7.4cm}{$n$-th asset surface area in scenario $\omega$ at time $t$ (m$^2$)}\\
$Z^{\mathrm{fbl},0}_{n}$ & \parbox[t]{7.4cm}{Initial forebay water level of asset $n$ (m)}\\
$\overline{Z}^{\mathrm{fbl}}_{n,i}$ & \parbox[t]{7.4cm}{$i$-th maximum forebay segment of asset $n$ (m)}\\
$\underline{Z}^{\mathrm{fbl}}_{n,i}$ & \parbox[t]{7.4cm}{$i$-th minimum forebay segment of asset $n$ (m)}\\
$Z^{\mathrm{tlr}}_n$ & \parbox[t]{7.4cm}{Tailrace water level of asset $n$ (m)}\\
$\Delta_T$ & Sampling period (s)\\
$\Delta q^{\mathrm{tr}}_n$ & \parbox[t]{7.4cm}{Turbines outflow ramp limit of asset $n$ (m$^3$/s)}\\
$\eta^{\mathrm{tr}}_{n}$ & \parbox[t]{7.4cm}{Turbine efficiency of asset $n$ (-)}\\
$\hat{\pi}^{\mathrm{E}}_{\omega, t}$ & \parbox[t]{7.5cm}{Forecast of day-ahead energy price in scenario $\omega$ at time $t$ (\euro/MWh)}\\
$\hat{\pi}^{\mathrm{E}\uparrow}_{\omega, t}$ & \parbox[t]{7.5cm}{Forecast of positive imbalance penalty price in scenario $\omega$ at time $t$ (\euro/MWh)}\\
$\hat{\pi}^{\mathrm{E}\downarrow}_{\omega, t}$ & \parbox[t]{7.5cm}{Forecast of negative imbalance penalty price in scenario $\omega$ at time $t$ (\euro/MWh)}\\
$\tau^{\mathrm{br}}_{a,b}$ & \parbox[t]{7.5cm}{Barrage water travel time from asset $a$ to $b$ (s)}\\
$\tau^{\mathrm{tr}}_{a,b}$ & \parbox[t]{7.5cm}{Turbined water travel time from asset $a$ to $b$ (s)}\\
\end{tabular}

\vspace{0.1cm}
\noindent \textit{Notation for the Proposed Distributed Algorithm}
\vspace{0.1cm}\\
\noindent \begin{tabular}{@{}l@{\hspace{0.2cm}}l@{}}
$\overline{K}$ & \parbox[t]{7.5cm}{Maximum number of iterations allowed (-)}\\
$J$ & \parbox[t]{7.5cm}{Objective function of the bidding problem (\euro)}\\
$J^{\mathrm{LB}}$ & \parbox[t]{7.5cm}{Lower bound on the problem's global optimum (\euro)}\\
$J^{\mathrm{UB}}$ & \parbox[t]{7.5cm}{Upper bound on the problem's global optimum (\euro)}\\
$J^{\mathrm{gap}}$ & \parbox[t]{7.5cm}{Gap between upper and lower bounds (\%)}\\
$\epsilon^{\mathrm{gap}}$ & \parbox[t]{7.5cm}{Tolerance parameter of termination criterion (\%)}\\
$\epsilon^{\mathrm{UB}}$ & \parbox[t]{7.5cm}{Tolerance parameter of upper bound update (\%)}\\
$\rho$ & \parbox[t]{7.5cm}{Penalty parameter (-)}\\
\end{tabular}

\vspace{0.1cm}
\noindent \textit{Variables}
\vspace{0.1cm}\\
\begin{tabular}{@{}l@{\hspace{0.2cm}}l@{}}
$b^{\mathrm{br}}_{n,\omega,t}$ & \parbox[t]{7.6cm}{Binary variable associated with the barrage safety constraint of asset $n$ in scenario $\omega$ at time $t$ (-)}\\
$b^{\mathrm{oc}}_{n,\omega,t,i}$ & \parbox[t]{7.6cm}{Binary variable of the $i$-th operational curve segment of asset $n$ in scenario $\omega$ at time $t$ (-)}\\
$e_{\omega, t}$ & \parbox[t]{7.6cm}{Energy offer in scenario $\omega$ at time $t$ (MWh)}\\
$h_{n, \omega, t}$ & \parbox[t]{7.6cm}{Hydraulic head of asset $n$ in scenario $\omega$ at time $t$ (m)}\\
$p^{\mathrm{H}}_{n, \omega, t}$ & \parbox[t]{7.6cm}{Output of asset $n$ in scenario $\omega$ at time $t$ (MW)}\\
$q^{\mathrm{br}}_{n, \omega, t}$ & \parbox[t]{7.6cm}{Outflow of barrage $n$ in scenario $\omega$ at time $t$ (m$^3$/s)}\\
$q^{\mathrm{in}}_{n,\omega,t}$ & \parbox[t]{7.6cm}{Inflow of asset $n$ in scenario $\omega$ at time $t$ (m$^3$/s)}\\
$q^{\mathrm{out}}_{n,\omega,t}$ & \parbox[t]{7.6cm}{Outflow of asset $n$ in scenario $\omega$ at time $t$ (m$^3$/s)}\\
$q^{\mathrm{tr}}_{n, \omega, t}$ & \parbox[t]{7.6cm}{Outflow of turbines $n$ in scenario $\omega$ at time $t$ (m$^3$/s)}\\
$z^{\mathrm{fbl}}_{n, \omega, t}$ & \parbox[t]{7.6cm}{Forebay level of asset $n$ in scenario $\omega$ at time $t$ (m)}
\end{tabular}

\noindent \begin{tabular}{@{}l@{\hspace{0.2cm}}l@{}}
$\delta^{\mathrm{E}\uparrow}_{\omega, t}$ & \parbox[t]{7.6cm}{Positive imbalance in scenario $\omega$ at time $t$ (MWh)}\\
$\delta^{\mathrm{E}\downarrow}_{\omega, t}$ & \parbox[t]{7.6cm}{Negative imbalance in scenario $\omega$ at time $t$ (MWh)}\\
$\boldsymbol{\lambda}_{n,\omega}$ & \parbox[t]{7.6cm}{Vector of dual variables corresponding to the consensus constraints of asset $n$ in scenario $\omega$}
\end{tabular}

\section{Introduction}
\subsection{Literature Review}
\IEEEPARstart{A}{s} the energy sector shifts from conventional generators to variable renewable energy sources (vRES) such as wind and solar, optimizing the bidding strategy and operation of these stochastic resources by means of controllable units has gained significant attention \cite{Koraki}. Hydropower plays a critical role in this transition, providing nearly half of the world's low-carbon electricity and offering an unparalleled 1500 TWh of storage capacity, dwarfing global battery storage by a factor of 2200 in 2021 \cite{Hydropower_Special_Market_Report}. Among hydropower systems, cascaded hydropower plants (CHPP) are effective in enhancing vRES market value \cite{Zhang1} and supporting grid operations \cite{Zhao}. However, co-optimizing the market-based operation of CHPP and vRES is challenging due to uncertainty sources, complex spatiotemporal constraints, and complicated nonlinear dynamics.

Approaches for managing uncertainty in CHPP-vRES systems can be broadly categorized into robust optimization \cite{Apostolopoulou1}, interval optimization \cite{Zhou2}, chance-constrained programming \cite{Qiu}, and scenario-based stochastic programming \cite{Fleten}. A comprehensive analysis of these methods within the broader context of power system optimization under uncertainty is detailed in \cite{Roald}.
Among these, scenario-based stochastic programming is often preferred for its flexibility in modeling uncertainty by incorporating multiple potential outcomes, or \textit{scenarios}, each weighted by its associated probability. Recent studies have shown the effectiveness of this approach in managing multiple sources of uncertainty inherent in the operation of CHPP, with applications including the optimization of market participation strategies \cite{LI2023109379}, maintenance scheduling \cite{ZHONG2024179}, and day-ahead operational planning \cite{LIU2023109327}. Scenario-based stochastic programming is particularly well-suited for handling unbounded uncertainty sources such as prices \cite{Helseth1}, inflow \cite{Wang}, and demand \cite{Shi}, which cannot typically be confined a priori to a specific set or bounded range, as required in robust and interval optimization approaches, respectively. Moreover, the scenario-based stochastic programming modeling approach yields solutions tailored to expected outcomes \cite{Zhang2}, in contrast to approaches like interval optimization, which typically focus on hedging against worst-case realizations \cite{Zhou2}.
However, it can be computationally inefficient, particularly with large sets of scenarios. In bidding problems, scenario-based approaches allow for the use of price-quantity bidding curves, enabling energy bid adjustments based on predicted prices and generation levels \cite{Mazzi}. This is particularly beneficial for vRES paired with flexible resources such as hydropower units \cite{Karasavvidis}.

Besides uncertainty management, a key challenge in CHPP operation is balancing modeling accuracy with tractability. Comprehensive physical or operational models result in complex mixed-integer nonlinear programming (MINLP) problems \cite{Yu}, for which there is still no efficient off-the-shelf solver. Significant complexity arises from the coupling among units, and the complicated nonlinear dynamics involved \cite{Séguin}. To address this, the original model is often simplified using three main methods \cite{Shayesteh}, namely aggregation, optimization/heuristic, and decomposition methods. Aggregation methods have been extensively applied to both short-term \cite{Guedes} and long-term \cite{Helseth2} scheduling of CHPP, but are best suited to systems with similar reservoir and inflow characteristics, as the individual plant constraints are overlooked. Researchers have also investigated ways to simplify the MINLP model while accounting for the individual dynamics of each plant. Solving these optimization models directly with commercial solvers is often very challenging. A common method extensively used in the current literature is outer approximation \cite{López-Salgado}, which decomposes the MINLP problem into a mixed-integer linear programming (MILP) master problem and a set of nonlinear programming (NLP) subproblems, although this often results in a suboptimal performance. Alternatively, the MINLP problem can be simplified in various manners, reducing to linear programming (LP) \cite{Apostolopoulou2}, MILP \cite{Lu}, quadratic programming \cite{Finardi}, mixed-integer quadratic programming (MIQP) \cite{Takriti} or NLP \cite{Huang} problems. MILP approximations are increasingly popular, offering a balance between modeling accuracy and tractability \cite{Tong}. To solve these models, methods such as linear decision rules \cite{Egging} and dynamic programming \cite{Huang} have been applied. While rule-based approaches may oversimplify the problem, dynamic programming can be computationally expensive due to dimensionality issues.

To overcome these limitations, researchers have explored decomposition techniques. Two main strategies identified in the literature are scenario decomposition and spatial decomposition \cite{Scuzziato}. Scenario decomposition methods, such as Benders decomposition \cite{López-Salgado} or the consensus alternating direction method of multipliers (ADMM) \cite{Takriti}, split the stochastic problem into multiple deterministic subproblems, typically one per scenario. Spatial decomposition, often achieved using Lagrangian relaxation techniques like dual decomposition \cite{Helseth3}, ADMM \cite{Ma}, or the auxiliary problem principle (APP) \cite{Santosuosso2}, breaks the problem down by units or groups of units. Both strategies can reduce computational burden compared to the centralized (non-decomposed) counterpart, however they often require assumptions of convexity to guarantee converging to the global solution. Even for MILP approximations, applying these methods to CHPP scheduling results in heuristic approaches \cite{Shayesteh}.

\subsection{Motivation and Contributions}
Co-optimized bidding strategy for CHPP and vRES is a complex stochastic MINLP problem. While MILP approximations have become popular for their ability to balance modeling accuracy versus computational tractability, they still incur significant computational burdens. Decomposition approaches have been effective in alleviating such a burden, however existing works focus on either spatial or scenario decomposition, addressing only one dimension of the problem's complexity, and most importantly, do not offer any solution guarantee in terms of optimality. To date, no decomposition method for CHPP-vRES bidding effectively scales both spatial and scenario dimensions while providing a performance guarantee.

This paper develops a scalable co-optimization method for the joint bidding of cascaded hydropower, wind and solar energy units in the day-ahead market, accounting for uncertainties in water inflows, prices, and renewable power generation. A MILP formulation is provided, which is then decomposed using a consensus ADMM approach. Since a straightforward application of ADMM to a mixed-integer problem inherently results in a heuristic method, the framework is extended to incorporate \textit{trustworthy} upper and lower bounds on the distributed solution. This enhancement not only accelerates the algorithm but also facilitates the assessment of solution quality, ensuring that the solution is $\epsilon$-suboptimal, meaning its objective value is guaranteed to be at most $\epsilon$ (e.g., 0.01\%) worse than the global optimal solution. In other words, the proposed distributed algorithm offers a \textit{performance guarantee}, as it provides solutions that operators can confidently rely on. Finally, drawing on recent advancements in consensus ADMM \cite{Gade}, the existence and validity of the bounds is demonstrated. Table~\ref{tab:literature_comparison} compares this study with the relevant related literature.

The \textbf{key contributions of this paper} are summarized as follows. First, a scenario-spatial decomposition of the centralized stochastic CHPP-vRES bidding strategy is proposed. This involves decomposing the problem with respect to three sets of coupling constraints: consistency constraints across price, inflow, and vRES scenarios (scenario coupling); power balancing among the aggregated energy units (spatial coupling); and hydraulic links within the hydropower cascade (spatial coupling). This full-scale decomposition distinguishes the proposed approach from previous studies that either address the centralized problem directly, e.g., \cite{Zhao, Apostolopoulou1, Qiu, Egging}, or decouple only one set of constraints, e.g., \cite{Santosuosso2, López-Salgado, Takriti, Helseth3, Ma}.  Second, a consensus ADMM-based distributed algorithm is developed to derive trustworthy upper and lower bounds on the global optimum of the bidding problem. As a distinct feature, this method offers a performance guarantee, even for MILP problems, by using bounds to quantify the ``distance" between the distributed solution and the global optimum. In contrast, existing decomposition approaches typically yield heuristic solutions when applied to mixed-integer models, e.g., \cite{Santosuosso2, López-Salgado, Takriti, Helseth3, Ma}. Finally, the paper offers a mathematical proof demonstrating the existence and validity of the bounds.

\begin{table*}[!t]
\caption{Comparison of this study with the relevant related literature \label{tab:literature_comparison}}
\centering
\begin{tabular}{|c|c|c|c|c|c|c|}
\hline
\textbf{Ref.} & \textbf{Computational strategy} & \textbf{Uncertainty modeled?} & \textbf{Problem} & \textbf{Solution method} & \textbf{Decomposition} & \textbf{Trustworthy}\\
\hline
\cite{Guedes} & Aggregation & $\times$ & MILP & Branch and bound & $\times$ & \checkmark\\
\cite{Shayesteh} & Aggregation & \checkmark (stochastic programming) & NLP & NLP solver & $\times$ & \checkmark\\
\cite{Huang} & Optimization & $\times$ & NLP & Dynamic programming & $\times$ & \checkmark\\
\cite{Apostolopoulou1} & Optimization & \checkmark (robust optimization) & LP & LP solver & $\times$ & \checkmark\\
\cite{Lu} & Optimization & \checkmark (interval programming) & MILP & Branch and bound & $\times$ & \checkmark\\
\cite{Wang} & Optimization & \checkmark (stochastic programming) & MILP & Branch and bound & $\times$ & \checkmark\\
\cite{Shi} & Optimization & \checkmark (stochastic programming) & MILP & Branch and bound & $\times$ & \checkmark\\
\cite{LIU2023109327} & Optimization & \checkmark (stochastic programming) & MILP & Branch and bound & $\times$ & \checkmark\\
\cite{LI2023109379} & Optimization & \checkmark (stochastic programming) & MILP & Dynamic programming & $\times$ & \checkmark\\
\cite{Egging}  & Optimization & \checkmark (stochastic programming) & LP & Linear decision rules & $\times$ & \checkmark\\
\cite{Santosuosso2} & Decomposition & $\times$ & MILP & APP & Spatial & $\times$\\
\cite{Ma} & Decomposition & $\times$ & MIQP & ADMM & Spatial & $\times$\\
\cite{Helseth3} & Decomposition & \checkmark (stochastic programming) & MILP & Dual decomposition & Spatial & $\times$\\
\cite{ZHONG2024179} & Decomposition & \checkmark (stochastic programming) & MILP & Benders decomposition & Scenario & $\times$\\
\cite{López-Salgado} & Decomposition & \checkmark (stochastic programming) & MINLP & Benders decomposition & Scenario & $\times$\\
\cite{Takriti} & Decomposition & \checkmark (stochastic programming) & MIQP & Consensus ADMM & Scenario & $\times$\\
This paper & Decomposition & \checkmark (stochastic programming) & MILP & Consensus ADMM & Spatial \& scenario & \checkmark\\
\hline
\end{tabular}
\end{table*}

The proposed approach is validated through a case study simulating the real-world system managed by the French aggregator Compagnie Nationale du Rhône \cite{Piron}. The model integrates real operational curves to account for the individual safety constraints of each hydropower plant. The non-convex nature of these curves introduces significant complexity into the optimization problem. Furthermore, dynamic price-quantity bidding curves are derived to enhance the adaptability of the bidding strategy to price forecasts, providing greater flexibility compared to fixed hourly bids, which are characterized by a uniform quantity offered for all price levels.  Simulation results assess the modeling accuracy, the aggregator's performance in the market, and the scalability of the distributed method. An ex-post out-of-sample validation is performed to determine the minimum number of scenarios necessary to adequately represent the underlying uncertainties.

The remainder of the article is organized as follows: Section \ref{sec:methodology} details the problem and its decomposition, Section \ref{sec:results} discusses the results, and Section \ref{sec:conclusions} concludes the study.

\section{Methodology}
\label{sec:methodology}
This section formulates the problem and outlines the proposed distributed method with trustworthy bounds. Let $n \in \boldsymbol{N}$ denote a hydropower plant within the cascaded system. The problem is formulated in discrete time over a finite set of sampling times $t \in \boldsymbol{T}$, with a sampling period of $\Delta_T$. The scenario $\omega \in \boldsymbol{\Omega}$ denotes a potential realization of uncertainty. The uncertain parameters are denoted by the hat symbol $\hat{(.)}$.
The symbol $\coloneqq$ denotes the definition of variables, vectors, functions, and sets, while $|\cdot|$ denotes the cardinality of a set.

\subsection{Problem Statement}

\begin{figure}[!t]
\centering
\includegraphics[scale=0.082]{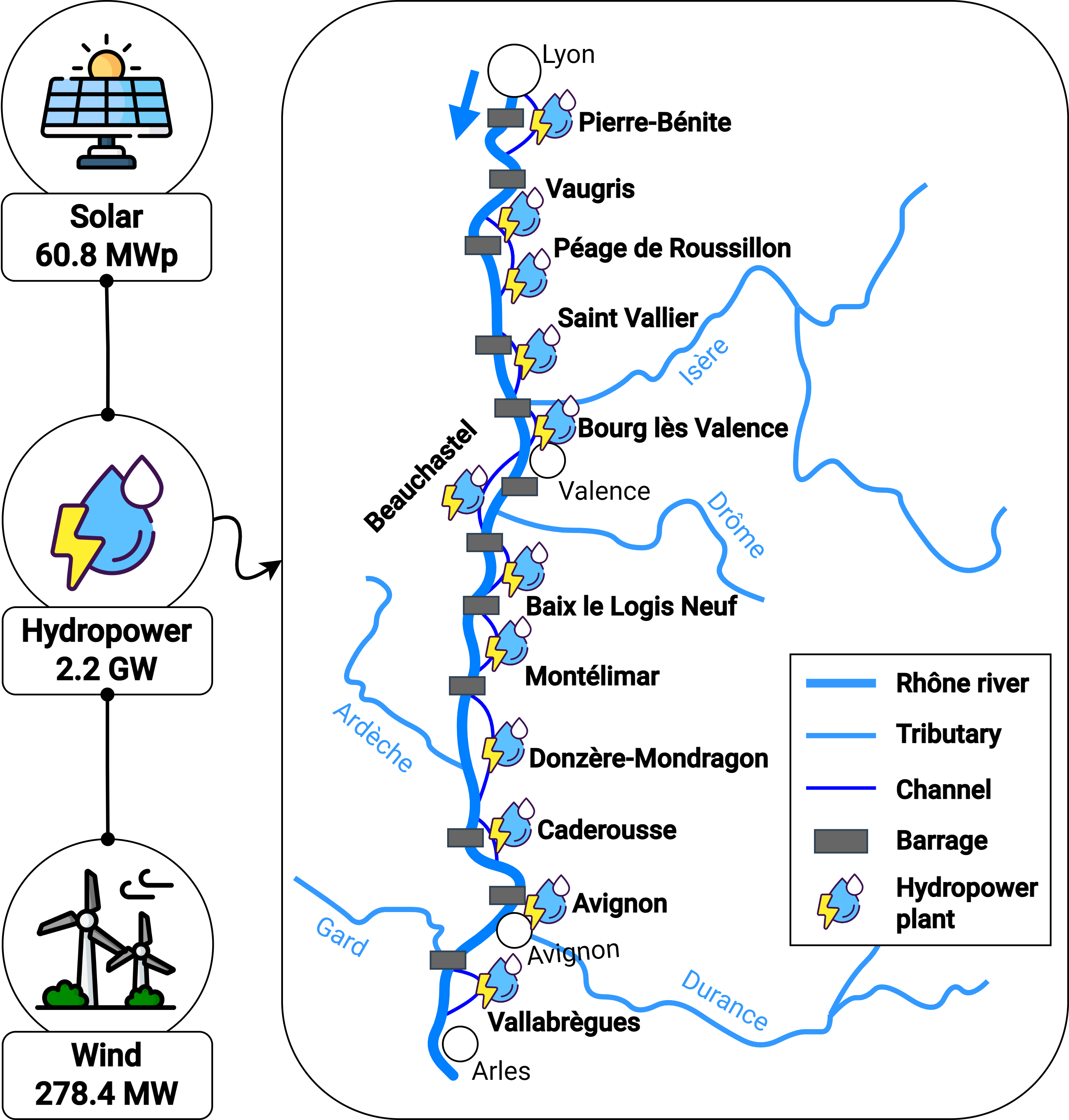}
\caption{Diagram of the VPP under consideration, comprising CHPP and vRES.}
\label{fig:vpp}
\end{figure}
The goal is to optimize the participation of a price-taking energy producer in the day-ahead market, known as the \textit{bidding problem} \cite{Steeger}. The system mimics a real-world virtual power plant (VPP) operated by Compagnie Nationale du Rhône, comprising 12 CHPP (totaling 2.2 GW of installed capacity), 6 wind farms (278.4 MW), and 4 solar farms (60.8 MW), as shown in Fig.~\ref{fig:vpp}. The study considers three sources of uncertainty—water inflow, vRES output, and market prices—which are modeled using a finite set of scenarios.

The energy volumes offered to the market are denoted by $e_{\omega,t}$, and the day-ahead energy price forecast by $\hat{\pi}^{\mathrm{E}}$. A dual pricing settlement scheme is employed, with separate prices for positive imbalances, $\delta^{\mathrm{E}\uparrow}$ (energy injection falling short of the offer), and negative imbalances, $\delta^{\mathrm{E}\downarrow}$ (energy injection exceeding the offer). The forecasted penalty prices for these imbalances are denoted by $\hat{\pi}^{\mathrm{E}\uparrow}$ and $\hat{\pi}^{\mathrm{E}\downarrow}$, respectively. Bids are submitted to the market operator as price-quantity pairs, forming a set of price-quantity bidding curves \cite{Mazzi}. The optimization problem considers all scenario prices as potential bidding points, with the energy produced for each hour and price determining the bid.
The use of price scenarios as potential bidding points has been extensively documented in the literature on optimal bidding strategies for VPPs \cite{7762908}, particularly within the framework of dual pricing settlement schemes \cite{8584080}. For a comprehensive comparison of the price-quantity bidding strategy with alternative price-only and quantity-only strategies, the reader is referred to \cite{Karasavvidis}.
Consistent with common rules in many European markets \cite{Steeger}, all supply bids within an hour must be non-decreasing as the price increases.

\subsection{Centralized Optimization Problem}
The VPP in Fig.~\ref{fig:vpp} is modeled with three key assumptions: (i) vRES are uncontrollable, facilitating the assessment of the cascade's ability to manage their variability; (ii) tailrace water levels remain constant due to the minimal short-term impact of run-of-the-river plant discharges; and (iii) all turbines within each hydropower plant are of the same type.
In accordance with assumption (iii), the formulation presumes that the turbines are homogeneous within each individual hydropower plant, while allowing for heterogeneous turbines across different plants within the cascaded system. This assumption is adopted to streamline the subsequent discussion and notation. Nevertheless, the proposed formulation can be readily extended to accommodate heterogeneous turbines within a single plant, for example by following the modeling approach presented in \cite{anagnostopoulos2007optimal}.

The dynamics of the forebay reservoir water levels, denoted by $z^{\mathrm{fbl}}$, are computed as a function of the reservoir's surface area $S$, inflow $q^{\mathrm{in}}$, outflow $q^{\mathrm{out}}$, and the initial forebay water level $Z^{\mathrm{fbl},0}$. For each asset pair, a specific point between the two reservoirs, termed the \textit{control point}, is selected, and the forebay water level is derived from the volume balance at this control point, leading to the following state-space model:
\begin{subequations}
\label{eq:z_fbl_dynamics}
\begin{align}
z^{\mathrm{fbl}}_{n,\omega,t} & = z^{\mathrm{fbl}}_{n,\omega,t-1} + \frac{\left(q^{\mathrm{in}}_{n,\omega,t} - q^{\mathrm{out}}_{n,\omega,t}\right) \Delta_T}{S_{n,\omega,t}}, \; \forall n, \forall \omega, \forall t,\\
z^{\mathrm{fbl}}_{n,\omega,0} & = z^{\mathrm{fbl}}_{n,\omega,|\boldsymbol{T}|} = Z^{\mathrm{fbl},0}_n, \; \forall n, \forall \omega.
\end{align}
\end{subequations}
The inflow into each reservoir is composed of the water discharged from the upstream hydropower system through the barrage and turbines, denoted by $q^{\mathrm{br}}$ and $q^{\mathrm{tr}}$, respectively, and the external inflow, denoted by $\hat{Q}^{\mathrm{ext}}$. Constraints \eqref{eq:q_in} describe the inflow into the $n$-th reservoir under scenario $\omega$ at time $t$:
\begin{multline}
\label{eq:q_in}
q^{\mathrm{in}}_{n,\omega,t} = q^{\mathrm{br}}_{n-1,\omega,t - \tau^{\mathrm{br}}_{n-1,n}} + q^{\mathrm{tr}}_{n-1,\omega,t - \tau^{\mathrm{tr}}_{n-1,n}} + \hat{Q}^{\mathrm{ext}}_{n,\omega,t},\\ \; \forall n, \forall \omega, \forall t,
\end{multline}
where $\tau_{n-1,n}^{\mathrm{tr}}$ and $\tau_{n-1,n}^{\mathrm{br}}$ represent the water propagation time from the plant $n-1$ to the control point of plant $n$ via turbines and barrage, respectively.
Similarly, the outflow is given by
\begin{equation}
\label{eq:q_out}
q^{\mathrm{out}}_{n,\omega,t} = q^{\mathrm{br}}_{n,\omega,t} + q^{\mathrm{tr}}_{n,\omega,t}, \; \forall n, \forall \omega, \forall t.
\end{equation}

To ensure stability and safety, CHPP operate under strict operational curves. Fig.~\ref{fig:operational_curve} shows an example of an operational curve for a hydropower plant. These curves are developed based on approved guidelines from relevant authorities and take into account various constraints, including navigation, irrigation, nuclear safety, agriculture, and tourism. During \textit{low-flow periods}, water levels are kept high for navigation, while during \textit{floods}, levels are controlled to mimic pre-barrage conditions. Reservoirs provide storage capacity only during normal inflow conditions, i.e., during \textit{energetic periods}.

\begin{figure}[!t]
\centering
\includegraphics[scale=.48]{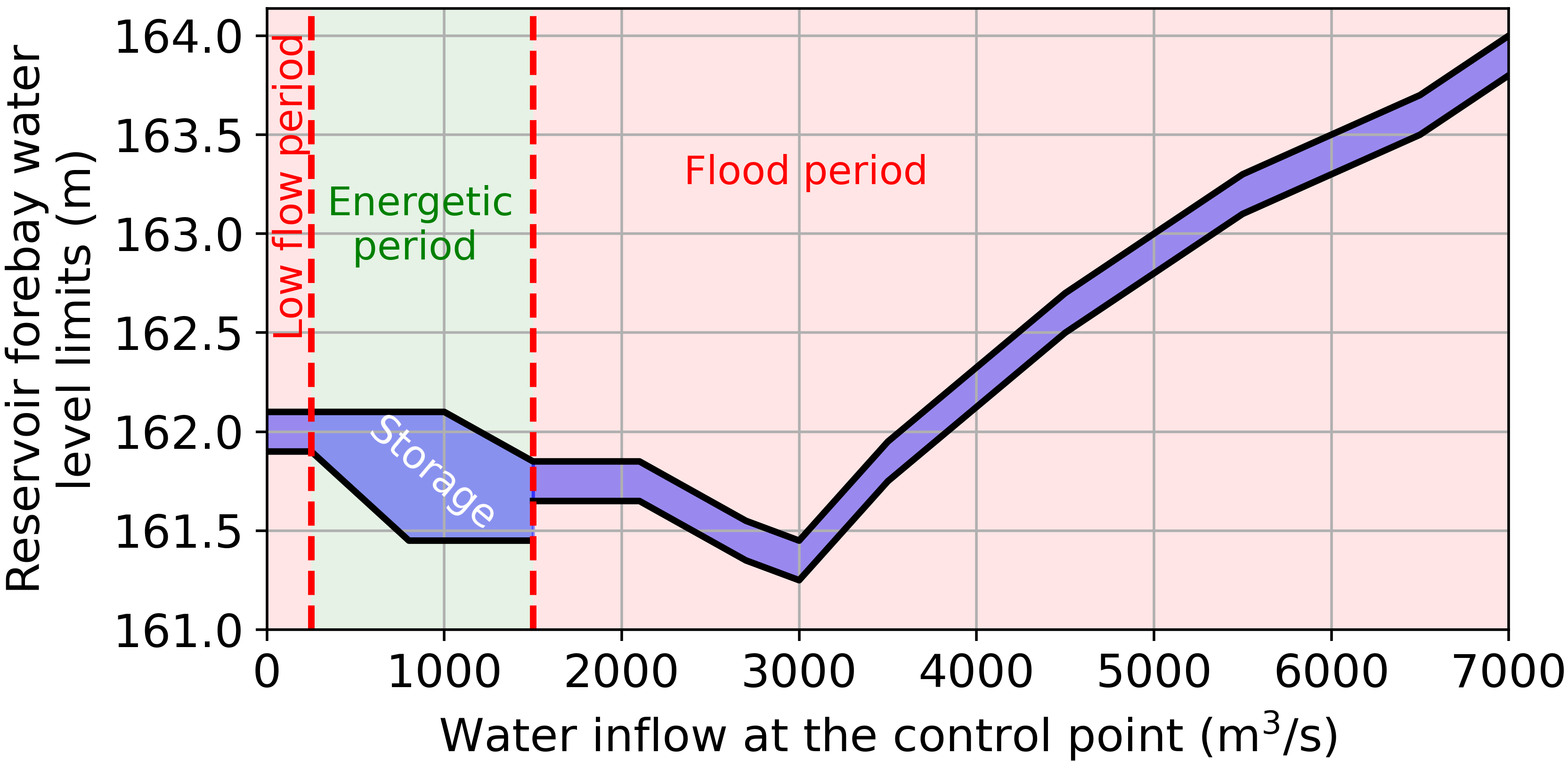}
    \caption{Example of an operational curve from Compagnie Nationale du Rhône.}
    \label{fig:operational_curve}
\end{figure}

The operational curves are approximated using the piecewise linear constraints in \eqref{eq:operational_curves_approx}. This involves dividing the inflow range into segments $Q^{\mathrm{in}}$, each linked to fixed forebay water levels $\underline{Z}^{\mathrm{fbl}}$ and $\overline{Z}^{\mathrm{fbl}}$. A set of inflow segments $\boldsymbol{S}_{n}^{\boldsymbol{\mathrm{oc}}}$ is computed for each hydropower plant $n$:
\begin{subequations}\label{eq:operational_curves_approx}
\begin{align}
q^{\mathrm{in}}_{n,\omega,t} &\geq b^{\mathrm{oc}}_{n,\omega,t,i} Q^{\mathrm{in}}_{n,i},\; \forall i \in \boldsymbol{S}_{n}^{\boldsymbol{\mathrm{oc}}}, \forall n, \forall \omega, \forall t,\\
q^{\mathrm{in}}_{n,\omega,t} &< b^{\mathrm{oc}}_{n,\omega,t,i} Q^{\mathrm{in}}_{n,i+1} + \left(1 - b^{\mathrm{oc}}_{n,\omega,t,i}\right) M^{\mathrm{oc}},\notag \\
&\hspace{10em} \forall i \in \boldsymbol{S}_{n}^{\boldsymbol{\mathrm{oc}}}, \forall n, \forall \omega, \forall t.
\end{align}
\end{subequations}
Here, $M^{\mathrm{oc}}$ is a large positive constant, and the binary variables $b^{\mathrm{oc}}$ are constrained by
\begin{equation}
\label{eq:operational_curves_binaries}
\sum_{i \in \boldsymbol{S}^{\boldsymbol{\mathrm{oc}}}_n} b^{\mathrm{oc}}_{n,\omega,t,i} = 1, \; \forall n, \forall \omega, \forall t.
\end{equation}
Constraints \eqref{eq:z_fbl_limits} enforce the forebay water level limits:
\begin{multline}
\label{eq:z_fbl_limits}
\sum_{i \in \boldsymbol{S}^{\boldsymbol{\mathrm{oc}}}_n} \underline{Z}^{\mathrm{fbl}}_{n,i} b^{\mathrm{oc}}_{n,\omega,t,i} \leq z^{\mathrm{fbl}}_{n,\omega,t} \leq \sum_{i \in \boldsymbol{S}^{\boldsymbol{\mathrm{oc}}}_n} \overline{Z}^{\mathrm{fbl}}_{n,i} b^{\mathrm{oc}}_{n,\omega,t,i},\\ \; \forall n, \forall \omega, \forall t.
\end{multline}
The turbines are constrained by the ramp limit:
\begin{subequations}
\label{eq:q_tr_maping_limit}
\begin{align}
q^{\mathrm{tr}}_{n,\omega,t} - q^{\mathrm{tr}}_{n,\omega,t-1} \leq \Delta q^{\mathrm{tr}}_n, \; \forall n, \forall \omega, \forall t,\\
q^{\mathrm{tr}}_{n,\omega,t-1} - q^{\mathrm{tr}}_{n,\omega,t} \leq \Delta q^{\mathrm{tr}}_n, \; \forall n, \forall \omega, \forall t.
\end{align}
\end{subequations}
For safety reasons, the opening of the barrage is allowed exclusively when the reservoir is at full capacity:
\begin{equation}
\label{eq:q_br_safety1}
b^{\mathrm{br}}_{n,\omega,t} \underline{Q}^{\mathrm{br}}_{n} \leq q^{\mathrm{br}}_{n,\omega,t} \leq b^{\mathrm{br}}_{n,\omega,t} M^{\mathrm{br}}, \; \forall n, \forall \omega, \forall t.
\end{equation}
Here, $\underline{Q}^{\mathrm{br}}$ is the minimum barrage discharge, $M^{\mathrm{br}}$ is a large positive constant, and the binary variables $b^{\mathrm{br}}$ satisfy
\begin{equation}
\label{eq:q_br_safety2}
b^{\mathrm{br}}_{n,\omega,t} \leq 1 - \frac{\sum_{i \in \boldsymbol{S}^{\boldsymbol{\mathrm{oc}}}_n} \overline{Z}^{\mathrm{fbl}}_{n,i} b^{\mathrm{oc}}_{n,\omega,t,i} - z^{\mathrm{fbl}}_{n,\omega,t}}{\mathrm{max}\left(\left\{\overline{Z}^{\mathrm{fbl}}_{n,i}\right\}_{i \in \boldsymbol{S}^{\boldsymbol{\mathrm{oc}}}_n}\right)},\; \forall n, \forall \omega, \forall t.
\end{equation}

Similarly, when the turbines operate, their discharge is constrained by
\begin{equation}
\label{eq:q_tr_limits}
\underline{Q}^{\mathrm{tr}}_{n} \leq q^{\mathrm{tr}}_{n,\omega,t} \leq \overline{Q}^{\mathrm{tr}}_{n}, \; \forall n, \forall \omega, \forall t.
\end{equation}

The hydropower output, $p^{\mathrm{H}}$, can be calculated as a function of the turbines outflow, net hydraulic head $h$, and turbine efficiency $\eta^{\mathrm{tr}}$, as described by
\begin{equation}
\label{eq:NL_hydropower_output}
p^{\mathrm{H}}_{n,\omega,t} = w \, g \, \eta^{\mathrm{tr}}_n \, q^{\mathrm{tr}}_{n,\omega,t} \, h_{n,\omega,t} \, 10^{-6}, \; \forall n, \forall \omega, \forall t,
\end{equation}
where $g$ is the acceleration due to gravity and $w$ is the density of water. The factor $10^{-6}$ is used to convert the power output from watts to megawatts. The head is defined by
\begin{equation}
\label{eq:hydraulic_head}
h_{n,\omega,t} = z^{\mathrm{fbl}}_{n,\omega,t} - Z^{\mathrm{tlr}}_n, \; \forall n, \forall \omega, \forall t.
\end{equation}
Here, $Z^{\mathrm{tlr}}$ denotes the constant tailrace reservoir water level. The hydropower generation function exhibits nonlinearity due to its bilinear dependence on $q^{\mathrm{tr}}$ and $h$. To address this, the McCormick approximation \cite{mccormick1976computability} in \eqref{eq:hydropower_output_McCormick} is employed to replace the bilinear term in \eqref{eq:NL_hydropower_output} with a convex envelope:
\begin{subequations}
\label{eq:hydropower_output_McCormick}
\begin{align}
p^{\mathrm{H}}_{n,\omega,t} & \geq C_n \left(\underline{Q}^{\mathrm{tr}}_{n} h_{n,\omega,t} + \underline{H}_{n} q^{\mathrm{tr}}_{n,\omega,t} - \underline{Q}^{\mathrm{tr}}_{n} \underline{H}_{n}\right),\\
p^{\mathrm{H}}_{n,\omega,t} & \geq C_n \left(\overline{Q}^{\mathrm{tr}}_{n} h_{n,\omega,t} + \overline{H}_{n} q^{\mathrm{tr}}_{n,\omega,t} - \overline{Q}^{\mathrm{tr}}_{n}  \overline{H}_{n}\right),\\
p^{\mathrm{H}}_{n,\omega,t} & \leq C_n \left(\underline{Q}^{\mathrm{tr}}_{n} h_{n,\omega,t} + \overline{H}_{n} q^{\mathrm{tr}}_{n,\omega,t} - \underline{Q}^{\mathrm{tr}}_n \overline{H}_{n}\right),\\
p^{\mathrm{H}}_{n,\omega,t} & \leq C_n \left(\overline{Q}^{\mathrm{tr}}_{n} h_{n,\omega,t} + \underline{H}_{n} q^{\mathrm{tr}}_{n,\omega,t} - \overline{Q}^{\mathrm{tr}}_n \underline{H}_{n}\right).
\end{align}
\end{subequations}
Here, $C_n = 10^{-6} w \, g \, \eta^{\mathrm{tr}}_n$, while $\overline{H}$ and $\underline{H}$ denote the maximum and minimum hydraulic head, respectively.
Fig.~\ref{fig:McCormick_approximation_hydropower_gen_fun} illustrates the McCormick approximation in \eqref{eq:hydropower_output_McCormick}. This linearization is a standard method for constructing convex envelopes around bilinear functions \cite{najman2019tightness}, with broad applications in hydropower modeling \cite{Shi}. It is considered one of the most accurate and tight approximations available for bilinear terms \cite{blom2024single}. A detailed error analysis is provided in \cite{flamm2020two}, showing that the largest deviations occur at the intersections of the bounding hyperplanes defined by \eqref{eq:hydropower_output_McCormick}.
For instance, assuming a turbines discharge in the range $[0,1000]$ m$^3$/s, maximum head of $5$ m, $95$\% turbine efficiency, water density of $1000$ kg/m$^3$, and gravitational acceleration of $9.81$ m/s$^2$, this approximation yields a maximum error of $\sim3$\% of the generation capacity.

\begin{figure}[!t]
\centering
\includegraphics[scale=.8]{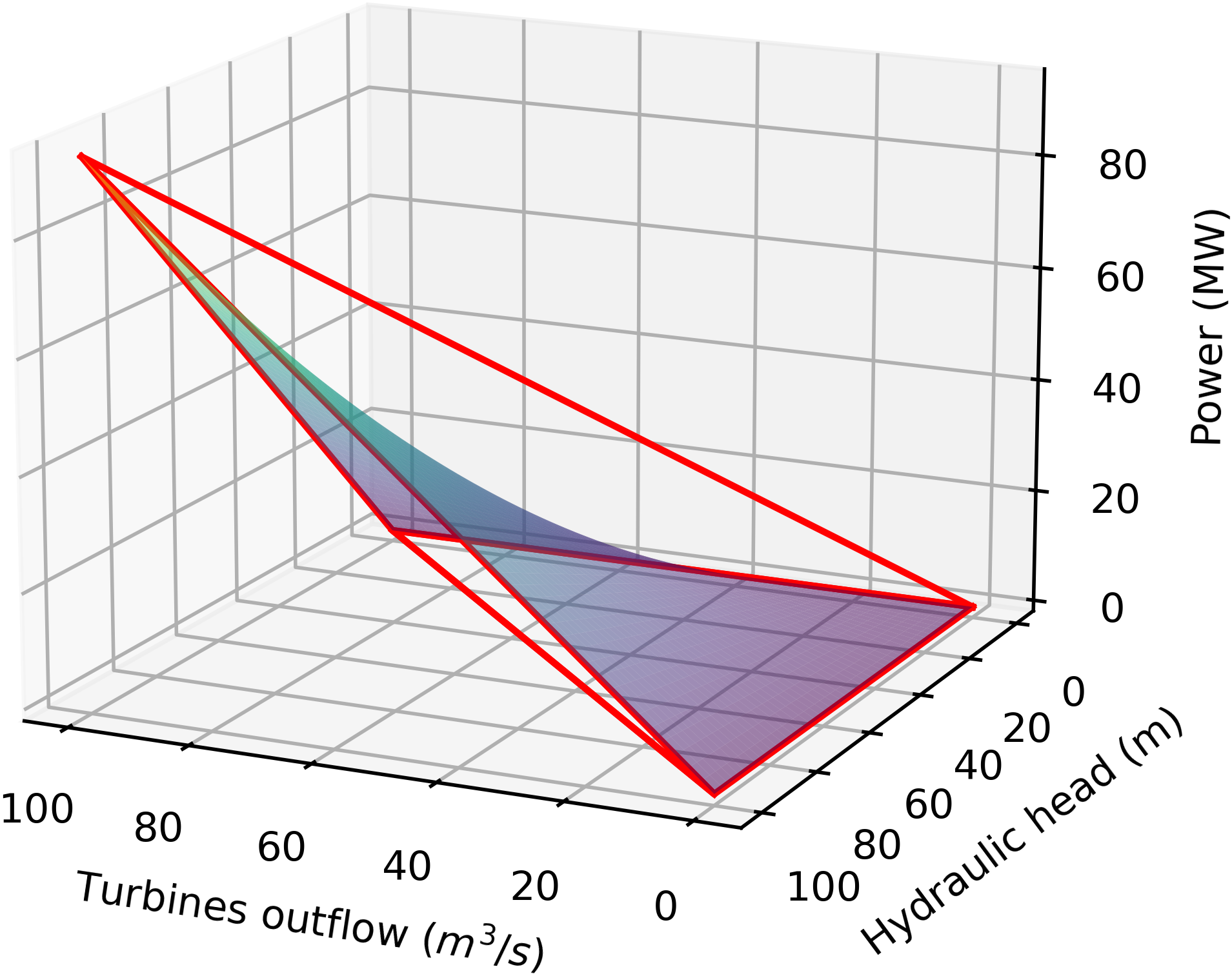}
    \caption{Visualization of the McCormick approximation in \eqref{eq:hydropower_output_McCormick}, assuming a turbine efficiency of 95\%.}
    \label{fig:McCormick_approximation_hydropower_gen_fun}
\end{figure}

The range of admissible values for $p^{\mathrm{H}}$ is enforced by
\begin{equation}
\label{eq:p_H_limits}
0 \leq p^{\mathrm{H}}_{n,\omega,t} \leq \overline{P}^{\mathrm{H}}_{n,\omega,t}, \; \forall n, \forall \omega, \forall t.
\end{equation}

Constraints \eqref{eq:power_balancing} define the VPP's power balance as
\begin{multline}
\label{eq:power_balancing}
\left( \sum_{n \in \boldsymbol{N}} p^{\mathrm{H}}_{n,\omega,t} + \hat{P}^{\mathrm{W}}_{\omega,t} + \hat{P}^{\mathrm{S}}_{\omega,t} \right) \Delta_T - e_{\omega, t} - \delta^{\mathrm{E}\downarrow}_{\omega,t}\\ + \delta^{\mathrm{E}\uparrow}_{\omega,t} = 0, \; \forall \omega, \forall t,
\end{multline}
where $\hat{P}^\mathrm{W}$ and $\hat{P}^\mathrm{S}$ denote the total wind and solar power generation of the VPP, respectively.

The aggregator minimizes the objective function defined as
\begin{equation}
\label{eq:cen_obj_fun}
J \coloneqq \sum_{\omega \in \boldsymbol{\Omega}} \sum_{t \in \boldsymbol{T}}  P_{\omega} \left(\hat{\pi}^{\mathrm{E}\uparrow}_{\omega,t} \delta^{\mathrm{E}\uparrow}_{\omega,t} - \hat{\pi}^{\mathrm{E}\downarrow}_{\omega,t} \delta^{\mathrm{E}\downarrow}_{\omega,t} - \hat{\pi}^{\mathrm{E}}_{\omega,t} e_{\omega, t}\right),
\end{equation}
representing the cost incurred by the VPP in the day-ahead market. Minimizing this cost is equivalent to maximizing the VPP's profit. Here, $P_\omega$ is the probability of scenario $\omega$. Constraints \eqref{eq:price_quantity_bidding_curves_consistency} ensure price-quantity bidding curves taking a non-decreasing form:
\begin{multline}
\label{eq:price_quantity_bidding_curves_consistency}
\left(\hat{\pi}^{\mathrm{E}}_{\omega,t} - \hat{\pi}^{\mathrm{E}}_{\omega',t}\right) \left(e_{\omega,t} - e_{\omega', t}\right) \geq 0,\; \forall \omega' \in \boldsymbol{\Omega} \setminus \{\omega\},\\ \forall \omega, \forall t.
\end{multline}

Finally, the set of decision variables $\boldsymbol{z}$ is defined as
\begin{multline}
\label{eq:decision_variables_set}
\boldsymbol{z} \coloneqq \Bigl\{e_{\omega,t}, \delta^{\mathrm{E}\uparrow}_{\omega,t}, \delta^{\mathrm{E}\downarrow}_{\omega,t}, q^{\mathrm{tr}}_{n,\omega,t}, q^{\mathrm{br}}_{n,\omega,t}, z^{\mathrm{fbl}}_{n,\omega,t}, p^{\mathrm{H}}_{n,\omega,t},\\
b^{\mathrm{br}}_{n,\omega,t}, b^{\mathrm{oc}}_{n,\omega,t,i}\Bigl\}_{i \in \boldsymbol{S}_{n}^{\boldsymbol{\mathrm{oc}}}, n \in \boldsymbol{N}, \omega \in \boldsymbol{\Omega}, t \in \boldsymbol{T}}.
\end{multline}
Then, the MILP given in \eqref{eq:cen_optim_problem} defines the bidding problem:
\begin{subequations}
\label{eq:cen_optim_problem}
\begin{align}
\min_{\boldsymbol{z}} \quad & J\\
\text{subject to:} \quad & \eqref{eq:z_fbl_dynamics}-\eqref{eq:q_tr_limits}, \eqref{eq:hydraulic_head}, \eqref{eq:p_H_limits}, \eqref{eq:power_balancing}, \eqref{eq:price_quantity_bidding_curves_consistency},\\
& \eqref{eq:hydropower_output_McCormick},\; \forall n, \forall \omega, \forall t.
\end{align}
\end{subequations}

\subsection{From Centralized to Distributed Optimization}
The centralized optimization problem \eqref{eq:cen_optim_problem} is characterized by spatial coupling, due to the hydraulic links among CHPP in \eqref{eq:z_fbl_dynamics} and the electrical coupling in \eqref{eq:power_balancing}, as well as the scenario coupling in \eqref{eq:price_quantity_bidding_curves_consistency}. All coupling constraints involve the \textit{global variables} $\boldsymbol{q}^{\boldsymbol{\mathrm{tr}}}_{n,\omega} \coloneqq \left[q^{\mathrm{tr}}_{n,\omega,1}, ..., q^{\mathrm{tr}}_{n,\omega,|\boldsymbol{T}|}\right]^\top$, $\boldsymbol{q}^{\boldsymbol{\mathrm{br}}}_{n,\omega} \coloneqq \left[q^{\mathrm{br}}_{n,\omega,1}, ..., q^{\mathrm{br}}_{n,\omega,|\boldsymbol{T}|}\right]^\top$ and $\boldsymbol{p}^{\boldsymbol{\mathrm{H}}}_{n,\omega} \coloneqq \left[p^{\mathrm{H}}_{n,\omega,1}, ..., p^{\mathrm{H}}_{n,\omega,|\boldsymbol{T}|}\right]^\top$.
By duplicating these variables, the centralized problem \eqref{eq:cen_optim_problem} is reformulated as a consensus problem, which reads as
\begin{subequations}
\label{eq:decomposed_optim_problem}
\begin{align}
\min_{\substack{\boldsymbol{z^{\mathrm{B}}},\\ \left\{\boldsymbol{z}^{\boldsymbol{\mathrm{H}}}_{n,\omega}\right\}_{n \in \boldsymbol{N}, \omega \in \boldsymbol{\Omega}}}} \quad & J\\
    \textrm{subject to} \quad & \boldsymbol{z^{\mathrm{B}}} \in \boldsymbol{\Xi}\left(\boldsymbol{\beta}\right), \label{eq:decomposed_optim_problem_sub1}\\
    & \boldsymbol{z}^{\boldsymbol{\mathrm{H}}}_{n,\omega} \in \boldsymbol{\Gamma}_{n,\omega}\left(\boldsymbol{\theta}_{n,\omega}\right), \; \forall n, \forall \omega, \label{eq:decomposed_optim_problem_sub2}\\
    & \boldsymbol{z}^{\boldsymbol{\mathrm{B},\mathrm{H}}}_{n,\omega} = \boldsymbol{\bar{z}}^{\boldsymbol{\mathrm{B},\mathrm{H}}}_{n,\omega}\; : \; \boldsymbol{\lambda}_{n,\omega}, \; \forall n, \forall \omega. \label{eq:consensus}
\end{align}
\end{subequations}
Here, $\boldsymbol{z^{\mathrm{B}}}$ collects the decision variables associated with the power balancing of the VPP, as defined by
\begin{equation}
\label{eq:z_B_def}
\boldsymbol{z^{\mathrm{B}}} \coloneqq \left\{e_{\omega,t}, \delta^{\mathrm{E}\uparrow}_{\omega,t}, \delta^{\mathrm{E}\downarrow}_{\omega,t}, \boldsymbol{\bar{p}}^{\boldsymbol{\mathrm{H}}}_{n,\omega}\right\}_{n \in \boldsymbol{N}, \omega \in \boldsymbol{\Omega}, t \in \boldsymbol{T}}.
\end{equation}
The decision variables associated with the dynamics of each hydropower plant are collected in $\boldsymbol{z}^{\boldsymbol{\mathrm{H}}}_{n,\omega}$, as defined by
\begin{multline}
\label{eq:z_H_def}
\boldsymbol{z}^{\boldsymbol{\mathrm{H}}}_{n,\omega} \coloneqq \Bigl\{b^{\mathrm{br}}_{n,\omega,t}, b^{\mathrm{oc}}_{n,\omega,t}, h_{n,\omega,t}, z^{\mathrm{fbl}}_{n,\omega,t}, \boldsymbol{\bar{q}}^{\boldsymbol{\mathrm{tr}}}_{n,\omega}, \boldsymbol{\tilde{q}}^{\boldsymbol{\mathrm{tr}}}_{n,\omega}, \boldsymbol{\bar{q}}^{\boldsymbol{\mathrm{br}}}_{n,\omega},\\ \boldsymbol{\tilde{q}}^{\boldsymbol{\mathrm{br}}}_{n,\omega}, \boldsymbol{\tilde{p}}^{\boldsymbol{\mathrm{H}}}_{n,\omega}\Bigl\}_{t \in \boldsymbol{T}}.
\end{multline}
The feasible sets of $\boldsymbol{z}^{\boldsymbol{\mathrm{B}}}$ and $\boldsymbol{z}^{\boldsymbol{\mathrm{H}}}_{n,\omega}$ are denoted by $\boldsymbol{\Xi}\left(\boldsymbol{\beta}\right)$ and $\boldsymbol{\Gamma}_{n,\omega}\left(\boldsymbol{\theta}_{n,\omega}\right)$, respectively. The former is defined by \eqref{eq:p_H_limits}, \eqref{eq:power_balancing} and \eqref{eq:price_quantity_bidding_curves_consistency}. The latter is defined by \eqref{eq:z_fbl_dynamics}-\eqref{eq:q_tr_limits} and \eqref{eq:hydraulic_head}-\eqref{eq:p_H_limits}. Vectors $\boldsymbol{\beta}$ and  $\boldsymbol{\theta}_{n,\omega}$ collect the parameters associated with the constraints defining the feasible sets $\boldsymbol{\Xi}$ and $\boldsymbol{\Gamma}_{n,\omega}$, respectively.

Moreover, the vector $\boldsymbol{z}^{\boldsymbol{\mathrm{B},\mathrm{H}}}_{n,\omega} \in \mathbb{R}^{6 \times |\boldsymbol{T}|}$ collects the global variables of the problem, as defined by
\begin{equation}
\label{eq:z_B_H_def}
\boldsymbol{z}^{\boldsymbol{\mathrm{B},\mathrm{H}}}_{n,\omega} \coloneqq \left[\boldsymbol{q}^{\boldsymbol{\mathrm{tr}}}_{n-1,\omega}, \boldsymbol{q}^{\boldsymbol{\mathrm{br}}}_{n-1,\omega}, \boldsymbol{q}^{\boldsymbol{\mathrm{tr}}}_{n,\omega}, \boldsymbol{q}^{\boldsymbol{\mathrm{br}}}_{n,\omega}, \boldsymbol{p}^{\boldsymbol{\mathrm{H}}}_{n,\omega}, \boldsymbol{p}^{\boldsymbol{\mathrm{H}}}_{n,\omega}\right]^\top.
\end{equation}
The vector $\boldsymbol{\bar{z}}^{\boldsymbol{\mathrm{B},\mathrm{H}}}_{n,\omega} \in \mathbb{R}^{6 \times |\boldsymbol{T}|}$ collects the \textit{local copies} of the global variables in $\boldsymbol{z}^{\boldsymbol{\mathrm{B},\mathrm{H}}}_{n,\omega}$, as defined by
\begin{equation}
\label{eq:z_B_H_bar_def}
\boldsymbol{\bar{z}}^{\boldsymbol{\mathrm{B},\mathrm{H}}}_{n,\omega} \coloneqq \left[\boldsymbol{\tilde{q}}^{\boldsymbol{\mathrm{tr}}}_{n-1,\omega}, \boldsymbol{\tilde{q}}^{\boldsymbol{\mathrm{br}}}_{n-1,\omega}, \boldsymbol{\bar{q}}^{\boldsymbol{\mathrm{tr}}}_{n,\omega}, \boldsymbol{\bar{q}}^{\boldsymbol{\mathrm{br}}}_{n,\omega}, \boldsymbol{\tilde{p}}^{\boldsymbol{\mathrm{H}}}_{n,\omega}, \boldsymbol{\bar{p}}^{\boldsymbol{\mathrm{H}}}_{n,\omega}\right]^\top.
\end{equation}
Consistency between global variables and local copies is enforced through the \textit{consensus constraints} in \eqref{eq:consensus}. The corresponding vector of dual variables $\boldsymbol{\lambda}_{n,\omega} \in \mathbb{R}^{6 \times |\boldsymbol{T}|}$ is defined as
\begin{equation}
\label{eq:lambda_def}
\boldsymbol{\lambda}_{n,\omega} \coloneqq \left[\boldsymbol{\tilde{\lambda}}^{\boldsymbol{\mathrm{tr}}}_{n-1,\omega}, \boldsymbol{\tilde{\lambda}}^{\boldsymbol{\mathrm{br}}}_{n-1,\omega}, \boldsymbol{\bar{\lambda}}^{\boldsymbol{\mathrm{tr}}}_{n,\omega}, \boldsymbol{\bar{\lambda}}^{\boldsymbol{\mathrm{br}}}_{n,\omega}, \boldsymbol{\tilde{\lambda}}^{\boldsymbol{\mathrm{p}}}_{n,\omega}, \boldsymbol{\bar{\lambda}}^{\boldsymbol{\mathrm{p}}}_{n,\omega}\right]^\top.
\end{equation}

\begin{figure}[!t]
\centering
\includegraphics[scale=.049]{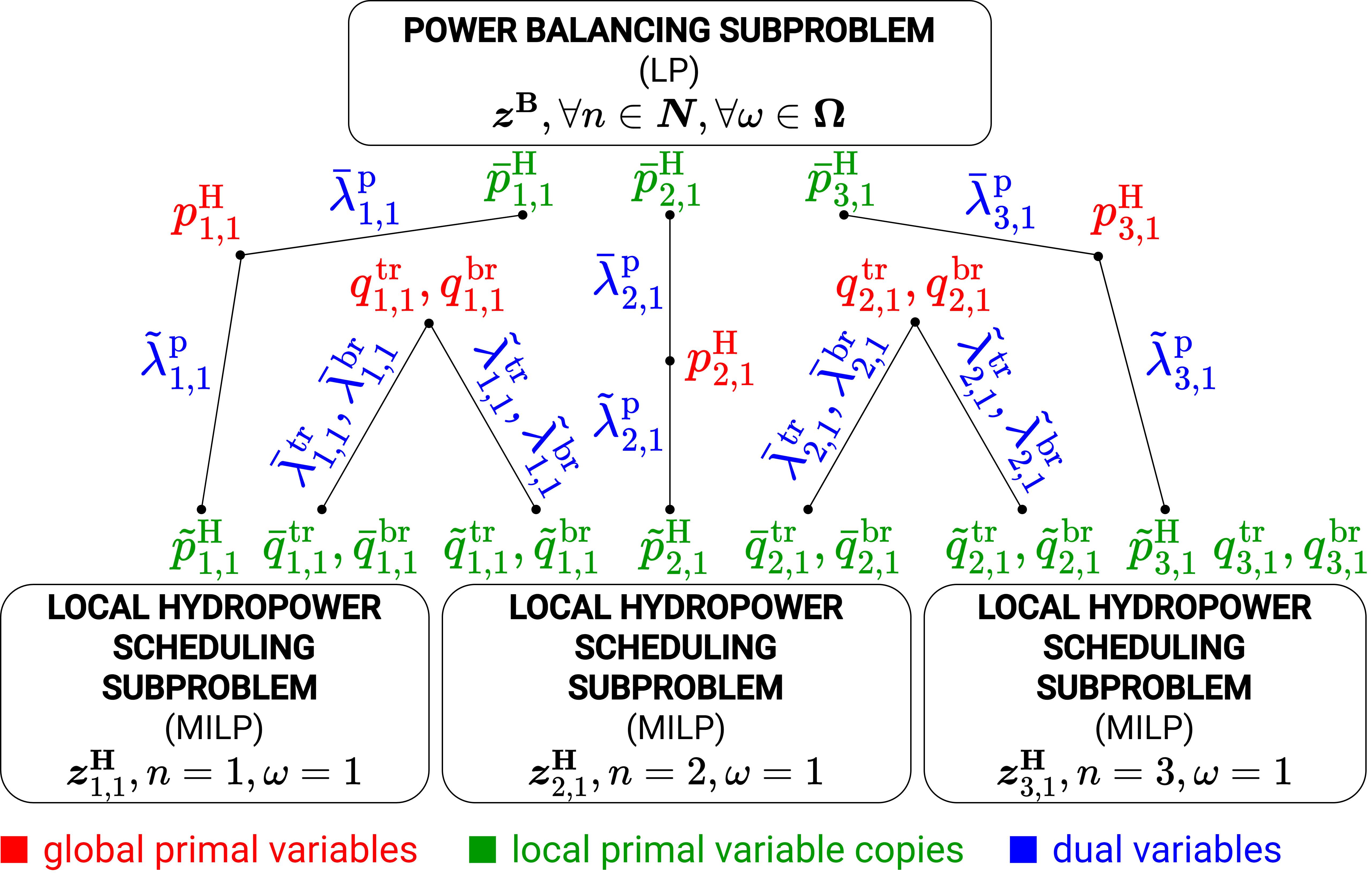}
    \caption{Example of decomposed structure of problem \eqref{eq:cen_optim_problem} when reformulated as the consensus problem \eqref{eq:decomposed_optim_problem}. The edges in the bipartite graph represent the consistency constraint linking global variables with their local copies.}
    \label{fig:decomposed_problem_example}
\end{figure}

Clearly, problem \eqref{eq:decomposed_optim_problem} exhibits a decomposable structure. Subproblem \eqref{eq:decomposed_optim_problem_sub1} pertains to the VPP power balancing and is referred to as the \textit{power balancing subproblem}. Subproblems \eqref{eq:decomposed_optim_problem_sub2} address the hydropower plants individual dynamics and are referred to as the \textit{local hydropower scheduling subproblems}. An example of the resulting decomposed structure is shown in Fig.~\ref{fig:decomposed_problem_example} for $\left|\boldsymbol{N}\right| = 3$ and $\left|\boldsymbol{\Omega}\right| = 1$. The computational advantage of this decomposition is that the original stochastic MILP problem, solved for all $n$ and $\omega$, is now broken down into $\left|\boldsymbol{N} \right| \times \left|\boldsymbol{\Omega} \right|$ deterministic MILP subproblems, and one stochastic LP subproblem, solved for all $n$ and $\omega$.

As illustrated in Fig.~\ref{fig:decomposed_problem_example}, the original global variables of the centralized problem \eqref{eq:cen_optim_problem}, namely $\boldsymbol{q}^{\boldsymbol{\mathrm{tr}}}_{n,\omega}$, $\boldsymbol{q}^{\boldsymbol{\mathrm{br}}}_{n,\omega}$, and $\boldsymbol{p}^{\boldsymbol{\mathrm{H}}}_{n,\omega}$, are duplicated in the proposed decomposition. For each set of decoupled coupling constraints, two distinct sets of subproblems are introduced, each operating on its own set of duplicated variables, which are distinguished by distinct superscripts. For example, the global variable $\boldsymbol{p}^{\boldsymbol{\mathrm{H}}}_{n,\omega}$ is decomposed into local copies $\boldsymbol{\bar{p}}^{\boldsymbol{\mathrm{H}}}_{n,\omega}$ for the power balancing subproblem, and distinct local copies $\boldsymbol{\tilde{p}}^{\boldsymbol{\mathrm{H}}}_{n,\omega}$ for the hydropower scheduling subproblems. The dual variables associated with the consensus constraints \eqref{eq:consensus}, which are enforced for each pair of global and local variable copies, are denoted using the same superscript as their corresponding local variables. The resulting scenario-spatial decomposition enables a fully parallelizable solution of the original centralized problem \eqref{eq:cen_optim_problem} via the distributed optimization algorithm detailed in the following subsection.

Analyzing the participation of the VPP examined in this study in markets other than the day-ahead market is outside the scope of this work. For a comprehensive evaluation of its profitability when simultaneously participating in both day-ahead and ancillary services markets, the reader is referred to \cite{santosuosso2025distributed}. Nonetheless, it is worth noting that the proposed scenario-spatial decomposition not only improves the scalability of the bidding strategy compared to traditional centralized approaches, but also enhances its flexibility and modularity. Specifically, the proposed decomposition approach decouples the local hydropower scheduling from the global power balancing of the VPP, resulting in two distinct sets of subproblems (see Fig.~\ref{fig:decomposed_problem_example}). In multi-market extensions, modifications to the bidding optimization model are typically confined to the global balancing subproblem and may include adjustments to the power balance constraints, updates to the objective function to reflect additional market revenues or penalties, and the inclusion of new constraints on the VPP’s cumulative power generation \cite{santosuosso2025distributed}. The local hydropower scheduling subproblems in Fig.~\ref{fig:decomposed_problem_example} remain unchanged. This modular structure supports both scalability and adaptability to the dynamic evolution of the VPP operations and objectives.

\subsection{Distributed Optimization with Trustworthy Bounds}
The reformulation presented in \eqref{eq:decomposed_optim_problem} enables the application of various distributed optimization techniques. Among others, the consensus ADMM has demonstrated significant effectiveness \cite{Boyd}. Let $k$ denote the current iteration of the distributed algorithm, and $\rho^k$ denote the penalty parameter, which is updated dynamically, as detailed in \cite{Boyd}. To simplify the following formulation, define the matrix $\mathbf{A} \in \mathbb{R}^{6|\boldsymbol{T}| \times 6|\boldsymbol{T}|}$ as a diagonal matrix with a $5|\boldsymbol{T}| \times 5|\boldsymbol{T}|$ identity matrix in the top-left block and the last $|\boldsymbol{T}|$ diagonal elements set to zero. By applying the consensus ADMM algorithm to \eqref{eq:decomposed_optim_problem}, the iterative steps outlined below are derived.

\textbf{Step I.} \textit{Local primal variables update} \eqref{eq:ADMM_stepIa}-\eqref{eq:ADMM_stepIb}:
\begin{equation}
\label{eq:ADMM_stepIa}
\begin{split}
{\boldsymbol{z}^{\boldsymbol{\mathrm{H}}}_{n,\omega}}^{k+1} = & \argmin_{\boldsymbol{z}^{\boldsymbol{\mathrm{H}}}_{n,\omega} \in \boldsymbol{\Gamma}_{n,\omega}}
\Biggl\{ \left(\mathbf{A} {\boldsymbol{\lambda}_{n,\omega}}^k\right)^\top \boldsymbol{\bar{z}}^{\boldsymbol{\mathrm{B},\mathrm{H}}}_{n,\omega}\\
& + \frac{\rho}{2} \left\| \mathbf{A} \left(\boldsymbol{\bar{z}}^{\boldsymbol{\mathrm{B},\mathrm{H}}}_{n,\omega} - {\boldsymbol{z}^{\boldsymbol{\mathrm{B},\mathrm{H}}}_{n,\omega}}^k \right)\right\|^2_2 \Biggl\}, \; \forall n, \forall \omega,
\end{split}
\end{equation}
\begin{equation}
\label{eq:ADMM_stepIb}
\begin{split}
{\boldsymbol{z^{\mathrm{B}}}}^{k+1} = & \argmin_{\boldsymbol{z^{\mathrm{B}}} \in \boldsymbol{\Xi}}
\Biggl\{ J + \sum_{n \in \boldsymbol{N}} \sum_{\omega \in \boldsymbol{\Omega}} \biggl( \left({\boldsymbol{\bar{\lambda}}^{\boldsymbol{\mathrm{p}}}_{n,\omega}}^k\right)^\top \boldsymbol{\bar{p}}^{\boldsymbol{\mathrm{H}}}_{n,\omega}\\
&+ \frac{\rho}{2} \left\|\boldsymbol{\bar{p}}^{\boldsymbol{\mathrm{H}}}_{n,\omega} - {\boldsymbol{p}^{\boldsymbol{\mathrm{H}}}_{n,\omega}}^{k} \right\|^2_2 \biggl) \Biggl\}.
\end{split}
\end{equation}

\textbf{Step II.} \textit{Global primal variables update} in \eqref{eq:ADMM_stepII}:
\begin{subequations}
\label{eq:ADMM_stepII}
\begin{align}
{\boldsymbol{q}^{\boldsymbol{\mathrm{tr}}}_{n,\omega}}^{k+1} & = \frac{1}{2} \left( {\boldsymbol{\bar{q}}^{\boldsymbol{\mathrm{tr}}}_{n,\omega}}^{k+1} + {\boldsymbol{\tilde{q}}^{\boldsymbol{\mathrm{tr}}}_{n,\omega}}^{k+1}\right), \; \forall n, \forall \omega,\\
{\boldsymbol{q}^{\boldsymbol{\mathrm{br}}}_{n,\omega}}^{k+1} &= \frac{1}{2} \left( {\boldsymbol{\bar{q}}^{\boldsymbol{\mathrm{br}}}_{n,\omega}}^{k+1} + {\boldsymbol{\tilde{q}}^{\boldsymbol{\mathrm{br}}}_{n,\omega}}^{k+1}\right), \; \forall n, \forall \omega,\\
{\boldsymbol{p}^{\boldsymbol{\mathrm{H}}}_{n,\omega}}^{k+1} &= \frac{1}{2} \left( {\boldsymbol{\bar{p}}^{\boldsymbol{\mathrm{H}}}_{n,\omega}}^{k+1} + {\boldsymbol{\tilde{p}}^{\boldsymbol{\mathrm{H}}}_{n,\omega}}^{k+1}\right), \; \forall n, \forall \omega.
\end{align}
\end{subequations}

\textbf{Step III.} \textit{Dual variables update} in \eqref{eq:ADMM_stepIII}:
\begin{equation}
\label{eq:ADMM_stepIII}
{\boldsymbol{\lambda}}^{k+1}_{n,\omega} = {\boldsymbol{\lambda}}^{k}_{n,\omega} + \rho \left({\boldsymbol{\bar{z}}^{\boldsymbol{\mathrm{B},\mathrm{H}}}_{n,\omega}}^{k+1} - {\boldsymbol{z}^{\boldsymbol{\mathrm{B},\mathrm{H}}}_{n,\omega}}^{k+1} \right), \; \forall n, \forall \omega.
\end{equation}

The simulation results to be presented in the next section show that applying ADMM to solve \eqref{eq:decomposed_optim_problem} often yields high-quality solutions, highlighting its potential. However, the method’s convergence depends on the convexity of the subproblems, which is not met here due to the presence of binary variables, making ADMM a promising yet heuristic approach for solving \eqref{eq:decomposed_optim_problem}. To make it trustworthy, a method is introduced to derive effective upper and lower bounds on the optimal objective function value of \eqref{eq:cen_optim_problem}, denoted by $J^\star$, based on the distributed solution computed at each iteration of the consensus ADMM.

The initial lower bound, ${J^{\mathrm{LB}}}^0$, is obtained by solving the linear relaxation of \eqref{eq:cen_optim_problem}. This relaxation expands the feasible space, inevitably yielding a new optimum that is less than or equal to $J^\star$. Conversely, the initial upper bound, ${J^{\mathrm{UB}}}^0$, is established by constraining the feasible space, which can be achieved by eliminating the flexibility of CHPP. This is done by setting the upper and lower limits on the forebay water level of each hydropower plant to be identical, thus fixing the binary variables in \eqref{eq:z_fbl_limits}.
Consequently, the binary variables $b^\mathrm{br}$ in \eqref{eq:q_br_safety1} are fixed to 1 for all plants, at every time step and scenario, allowing the constraints \eqref{eq:q_br_safety2} to be neglected.
Determining both ${J^{\mathrm{LB}}}^0$ and ${J^{\mathrm{UB}}}^0$ involves solving two LP problems before executing the distributed algorithm, with minimal computational impact. The proposed method then iteratively updates ${J^{\mathrm{LB}}}^0$ and ${J^{\mathrm{UB}}}^0$ as the ADMM algorithm proceeds, following two distinct strategies.

For the upper bound, each iteration $k$ yields an updated value, ${J^{\mathrm{UB}}}^k$, through a \textit{quasi-projection step}. Let $\boldsymbol{z}^k$ denote the primal variable value obtained via ADMM. Drawing inspiration from the outer approximation strategy \cite{López-Salgado}, $\boldsymbol{z}^k$ is projected onto the LP formed by fixing the binary variables in \eqref{eq:cen_optim_problem} to the values of $\boldsymbol{z}^k$. This projection, denoted by ${\boldsymbol{z}^{\mathrm{prj}}}^k$, remains feasible within \eqref{eq:cen_optim_problem} since it is derived from a projection onto a new feasible space formed by constraining \eqref{eq:cen_optim_problem}—specifically, fixing the binary variables. Thus, $J^\star \leq {J^{\mathrm{UB}}}^k \coloneqq J\left({\boldsymbol{z}^{\mathrm{prj}}}^k\right)$.

Proposition~\ref{proposition} describes the process of using dual variables associated with the consensus constraints \eqref{eq:decomposed_optim_problem} for the iterative updating of the lower bound ${J^{\mathrm{LB}}}^k$. It extends the theoretical findings from \cite{Gade}—established for consensus ADMM in scenario decomposition—to the case where the problem is decomposed across both spatial and scenario dimensions.
Notably, although stated for $J$ in \eqref{eq:cen_obj_fun}, the result of Proposition~\ref{proposition} readily extends to any alternative convex objective function.

\begin{proposition}
\label{proposition}
Let $\boldsymbol{\lambda} \coloneqq \left\{\boldsymbol{\lambda}_{n,\omega}\right\}_{n \in \boldsymbol{N}, \omega \in \boldsymbol{\Omega}}$ satisfy the following conditions (component-wise):
\begin{align}
\sum_{n \in \boldsymbol{N}} \sum_{\omega \in \boldsymbol{\Omega}} P_\omega \left(\boldsymbol{\bar{\lambda}}^{\boldsymbol{\mathrm{p}}}_{n,\omega} + \boldsymbol{\tilde{\lambda}}^{\boldsymbol{\mathrm{p}}}_{n,\omega}\right) = 0, \label{eq:prop_assumption1}\\
\sum_{n \in \boldsymbol{N}} \sum_{\omega \in \boldsymbol{\Omega}} P_\omega \left(\boldsymbol{\bar{\lambda}}^{\boldsymbol{\mathrm{tr}}}_{n,\omega} + \boldsymbol{\tilde{\lambda}}^{\boldsymbol{\mathrm{tr}}}_{n,\omega}\right) = 0, \label{eq:prop_assumption2}\\
\sum_{n \in \boldsymbol{N}} \sum_{\omega \in \boldsymbol{\Omega}} P_\omega \left(\boldsymbol{\bar{\lambda}}^{\boldsymbol{\mathrm{br}}}_{n,\omega} + \boldsymbol{\tilde{\lambda}}^{\boldsymbol{\mathrm{br}}}_{n,\omega}\right) = 0. \label{eq:prop_assumption3}
\end{align}
Let
\begin{equation}
D^{\mathrm{B}}\left(\boldsymbol{\lambda}\right) \coloneqq \min_{\boldsymbol{z}^{\boldsymbol{\mathrm{B}}} \in \boldsymbol{\Xi}} \Biggl\{ J + \sum_{n \in \boldsymbol{N}} \sum_{\omega \in \boldsymbol{\Omega}} P_\omega \left({\boldsymbol{\bar{\lambda}}^{\boldsymbol{\mathrm{p}}}_{n,\omega}}\right)^\top \boldsymbol{p}^{\boldsymbol{\mathrm{H}}}_{n,\omega} \Biggl\},
\end{equation}
and
\begin{multline}
D^{\mathrm{H}}_{n,\omega}\left(\boldsymbol{\lambda}_{n,\omega}\right) \coloneqq \min_{\boldsymbol{z}^{\boldsymbol{\mathrm{H}}}_{n,\omega} \in \boldsymbol{\Gamma}_{n,\omega}}
\left\{ \left(\mathbf{A} {\boldsymbol{\lambda}_{n,\omega}}\right)^\top \boldsymbol{z}^{\boldsymbol{\mathrm{B},\mathrm{H}}}_{n,\omega}
\right\},\\ \forall n, \forall \omega.
\end{multline}
Then
\begin{equation}
D\left(\boldsymbol{\lambda}\right) \coloneqq D^{\mathrm{B}}(\boldsymbol{\lambda}) + \sum_{n \in \boldsymbol{N}} \sum_{\omega \in \boldsymbol{\Omega}} P_{\omega} D^{\mathrm{H}}_{n,\omega}\left(\boldsymbol{\lambda}_{n, \omega}\right) \leq J^\star.
\end{equation}
\end{proposition}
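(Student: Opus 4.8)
The plan is to establish the inequality as an instance of Lagrangian weak duality, following and extending the consensus-ADMM lower-bounding technique of \cite{Gade} from pure scenario decomposition to the present joint scenario--spatial setting. Since the consensus reformulation \eqref{eq:decomposed_optim_problem} is obtained from \eqref{eq:cen_optim_problem} merely by duplicating the global variables $\boldsymbol{q}^{\boldsymbol{\mathrm{tr}}}_{n,\omega}$, $\boldsymbol{q}^{\boldsymbol{\mathrm{br}}}_{n,\omega}$ and $\boldsymbol{p}^{\boldsymbol{\mathrm{H}}}_{n,\omega}$ and imposing the equalities \eqref{eq:consensus}, the two problems share the same optimal value $J^\star$. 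I would therefore work with the partial Lagrangian obtained by relaxing only the consensus constraints \eqref{eq:consensus}, weighting each scenario block by its probability $P_\omega$ as is standard in stochastic programming, while keeping the feasibility requirements $\boldsymbol{z}^{\boldsymbol{\mathrm{B}}}\in\boldsymbol{\Xi}$ and $\boldsymbol{z}^{\boldsymbol{\mathrm{H}}}_{n,\omega}\in\boldsymbol{\Gamma}_{n,\omega}$ intact.

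The decisive step is the minimization of this Lagrangian over the global variables $\boldsymbol{z}^{\boldsymbol{\mathrm{B},\mathrm{H}}}_{n,\omega}$, which enter \eqref{eq:decomposed_optim_problem} only through \eqref{eq:consensus} and are hence otherwise free. I would collect, for each physical quantity, the total coefficient multiplying its global copy: each global flow $\boldsymbol{q}^{\boldsymbol{\mathrm{tr}}}_{n,\omega}$ (resp.\ $\boldsymbol{q}^{\boldsymbol{\mathrm{br}}}_{n,\omega}$) is shared between plant $n$'s own subproblem and the downstream plant $n{+}1$'s inflow term, so after reindexing the cascade it carries the combination $\boldsymbol{\bar{\lambda}}^{\boldsymbol{\mathrm{tr}}}_{n,\omega}+\boldsymbol{\tilde{\lambda}}^{\boldsymbol{\mathrm{tr}}}_{n,\omega}$ (resp.\ its barrage analogue), while each global power $\boldsymbol{p}^{\boldsymbol{\mathrm{H}}}_{n,\omega}$ is shared between the hydropower and the power-balancing copies and carries $\boldsymbol{\bar{\lambda}}^{\boldsymbol{\mathrm{p}}}_{n,\omega}+\boldsymbol{\tilde{\lambda}}^{\boldsymbol{\mathrm{p}}}_{n,\omega}$. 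A linear term in a free variable is bounded below only if its coefficient vanishes; imposing this on every global copy is exactly what the probability-weighted conditions \eqref{eq:prop_assumption1}--\eqref{eq:prop_assumption3} encode, and under them the global-variable contributions drop out of the Lagrangian entirely.

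What remains is a minimization of $J$ plus the surviving linear terms over the two families of local copies, whose feasible sets $\boldsymbol{\Xi}$ and $\boldsymbol{\Gamma}_{n,\omega}$ are mutually independent once the coupling \eqref{eq:consensus} has been relaxed. The objective therefore separates additively: the part retaining $J$ together with the $\boldsymbol{\bar{\lambda}}^{\boldsymbol{\mathrm{p}}}$ terms over $\boldsymbol{z}^{\boldsymbol{\mathrm{B}}}\in\boldsymbol{\Xi}$ is precisely $D^{\mathrm{B}}(\boldsymbol{\lambda})$, and the matrix $\mathbf{A}$---which annihilates the balancing-copy block---routes the remaining multipliers into the per-$(n,\omega)$ hydropower problems, giving $\sum_{n}\sum_{\omega}P_\omega D^{\mathrm{H}}_{n,\omega}(\boldsymbol{\lambda}_{n,\omega})$. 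Hence the reduced Lagrangian equals $D(\boldsymbol{\lambda})$. Weak duality then closes the argument: the optimal primal point of \eqref{eq:decomposed_optim_problem} is feasible and satisfies \eqref{eq:consensus}, so evaluating the Lagrangian there returns $J^\star$ with zero relaxed-constraint residual, whence $D(\boldsymbol{\lambda})=\min L \le J^\star$.

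The main obstacle I anticipate is the bookkeeping of the shared global variables rather than any deep analytic difficulty. One must verify that every global copy appears in exactly the subproblems claimed---being careful with the upstream/downstream reindexing in the cascade and the two boundary plants, and with the fact that $\boldsymbol{p}^{\boldsymbol{\mathrm{H}}}_{n,\omega}$ occupies two blocks of $\boldsymbol{z}^{\boldsymbol{\mathrm{B},\mathrm{H}}}_{n,\omega}$---so that the coefficient collected for each free variable is genuinely the combination appearing in \eqref{eq:prop_assumption1}--\eqref{eq:prop_assumption3} and nothing else, and so that one resolves whether the cancellation is required at the aggregate level stated or, as the elimination of each free copy suggests, at the finer per-$(n,\omega)$ level that the consensus dual updates in fact maintain. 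The role of $\mathbf{A}$ and of the probability weights must be tracked consistently between the two subproblem families; once this accounting is correct, the separation and the weak-duality conclusion are routine, and the extension beyond the specific $J$ in \eqref{eq:cen_obj_fun} to any convex objective is immediate, since only linearity of the relaxed terms and weak duality were used.
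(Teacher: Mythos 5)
Your argument is at bottom the same weak-duality argument the paper gives: relax the consensus constraints \eqref{eq:consensus}, let the problem separate into $D^{\mathrm{B}}$ and the $D^{\mathrm{H}}_{n,\omega}$, and bound the sum by $J^\star$. The packaging differs slightly --- the paper never forms the partial Lagrangian or minimizes over the free global copies; it simply observes that the optimal solution of \eqref{eq:cen_optim_problem} is feasible in $\boldsymbol{\Xi}$ and in each $\boldsymbol{\Gamma}_{n,\omega}$, evaluates each subproblem minimum at that point, sums with weights $P_\omega$, and invokes \eqref{eq:prop_assumption1}--\eqref{eq:prop_assumption3} to cancel the linear terms --- but the two presentations are interchangeable. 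The one point worth dwelling on is the concern you flag at the end, which is substantive rather than mere bookkeeping: eliminating each free global copy in your version, and equally cancelling $\sum_{n\in\boldsymbol{N}}\sum_{\omega\in\boldsymbol{\Omega}}P_\omega\left(\boldsymbol{\bar{\lambda}}^{\boldsymbol{\mathrm{p}}}_{n,\omega}+\boldsymbol{\tilde{\lambda}}^{\boldsymbol{\mathrm{p}}}_{n,\omega}\right)^\top\boldsymbol{p}^{\boldsymbol{\mathrm{H}}^\star}_{n,\omega}$ in the paper's final step, requires the multiplier sums to vanish for each $(n,\omega)$ separately, because the optimal $\boldsymbol{p}^{\boldsymbol{\mathrm{H}}^\star}_{n,\omega}$, $\boldsymbol{q}^{\boldsymbol{\mathrm{tr}}^\star}_{n,\omega}$, $\boldsymbol{q}^{\boldsymbol{\mathrm{br}}^\star}_{n,\omega}$ differ across units and scenarios; the aggregate conditions \eqref{eq:prop_assumption1}--\eqref{eq:prop_assumption3} alone do not imply the cancellation here, unlike in the pure scenario decomposition of \cite{Gade}, where every scenario copy is tied to a single common first-stage value that can be factored out of the sum. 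Since the ADMM averaging and dual-update steps do preserve the finer per-$(n,\omega)$ identity $\boldsymbol{\bar{\lambda}}_{n,\omega}+\boldsymbol{\tilde{\lambda}}_{n,\omega}=0$ (when initialized accordingly), the bound is valid in the setting where it is actually used, but you are right that the hypothesis should be stated at that level; with that adjustment your proof is sound and matches the paper's.
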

\begin{proof}
Let $\left\{\boldsymbol{b}^{\boldsymbol{\mathrm{br}}^\star}, \boldsymbol{b}^{\boldsymbol{\mathrm{oc}}^\star} \boldsymbol{e}^\star, \boldsymbol{\delta^{\mathrm{E}}}^\star, 
\boldsymbol{p}^{\boldsymbol{\mathrm{H}}^\star}, \boldsymbol{q}^{\boldsymbol{\mathrm{tr}}^\star}, \boldsymbol{q}^{\boldsymbol{\mathrm{br}}^\star}, \boldsymbol{z}^{\boldsymbol{\mathrm{fbl}}^\star}\right\}$ be an optimal solution of the original problem \eqref{eq:cen_optim_problem}. Feasibility implies that this optimal solution is also feasible within the relaxed feasible spaces of the subproblems \eqref{eq:decomposed_optim_problem_sub1} and \eqref{eq:decomposed_optim_problem_sub2}, i.e., it is feasible in $\boldsymbol{\Xi}$ and $\boldsymbol{\Gamma}_{n,\omega}, \forall n, \forall \omega$, respectively. Thus:
\begin{equation}
D^{\mathrm{B}}(\boldsymbol{\lambda}) \leq J(\boldsymbol{e}^\star, \boldsymbol{\delta^{\mathrm{E}}}^\star) + \sum_{n \in \boldsymbol{N}} \sum_{\omega \in \boldsymbol{\Omega}} P_\omega \left({\boldsymbol{\bar{\lambda}}^{\boldsymbol{\mathrm{p}}}_{n,\omega}}\right)^\top \boldsymbol{p}^{\boldsymbol{\mathrm{H}}^\star}_{n,\omega},
\end{equation}
and
\begin{equation}
\begin{split}
D^{\mathrm{H}}(\boldsymbol{\lambda}_{n,\omega}) \leq &
\left({\boldsymbol{\tilde{\lambda}}^{\boldsymbol{\mathrm{p}}}_{n,\omega}}\right)^\top \boldsymbol{p}^{{\boldsymbol{\mathrm{H}}}^\star}_{n,\omega}
+ \left(\boldsymbol{\tilde{\lambda}}_{n-1,\omega}^{\boldsymbol{\mathrm{tr}}}\right)^\top \boldsymbol{q}_{n-1,\omega}^{\boldsymbol{\mathrm{tr}}^\star}\\
& + \left(\boldsymbol{\tilde{\lambda}}_{n-1,\omega}^{\boldsymbol{\mathrm{br}}}\right)^\top \boldsymbol{q}_{n-1,\omega}^{\boldsymbol{\mathrm{br}}^\star}
+ \left(\boldsymbol{\bar{\lambda}}_{n,\omega}^{\boldsymbol{\mathrm{tr}}}\right)^\top \boldsymbol{q}^{\boldsymbol{\mathrm{tr}}^\star}_{n,\omega}\\
& + \left(\boldsymbol{\bar{\lambda}}_{n,\omega}^{\boldsymbol{\mathrm{br}}}\right)^\top \boldsymbol{q}^{\boldsymbol{\mathrm{br}}^\star}_{n,\omega} \; , \; \forall n, \forall \omega.
\end{split}
\end{equation}
Then
\begin{equation}
\begin{split}
D(\boldsymbol{\lambda}) \leq & \sum_{n \in \boldsymbol{N}} \sum_{\omega \in \boldsymbol{\Omega}} P_\omega \left(\boldsymbol{\bar{\lambda}}_{n,\omega}^{\boldsymbol{\mathrm{p}}} + \boldsymbol{\tilde{\lambda}}_{n,\omega}^{\boldsymbol{\mathrm{p}}}\right)^\top \boldsymbol{p}_{n,\omega}^{\boldsymbol{\mathrm{H}}^\star}\\
& + \sum_{n \in \boldsymbol{N}} \sum_{\omega \in \boldsymbol{\Omega}} P_\omega \left(\boldsymbol{\bar{\lambda}}_{n,\omega}^{\boldsymbol{\mathrm{tr}}} + \boldsymbol{\tilde{\lambda}}_{n,\omega}^{\boldsymbol{\mathrm{tr}}}\right)^\top \boldsymbol{q}^{\boldsymbol{\mathrm{tr}}^\star}_{n,\omega} \\
& +
\sum_{n \in \boldsymbol{N}} \sum_{\omega \in \boldsymbol{\Omega}} P_\omega \left(\boldsymbol{\bar{\lambda}}_{n,\omega}^{\boldsymbol{\mathrm{br}}} + \boldsymbol{\tilde{\lambda}}_{n,\omega}^{\boldsymbol{\mathrm{br}}}\right)^\top \boldsymbol{q}_{n,\omega}^{\boldsymbol{\mathrm{br}}^\star}\\
& + J(\boldsymbol{e}^\star, \boldsymbol{\delta^{\mathrm{E}}}^\star)\\
= & J(\boldsymbol{e}^\star, \boldsymbol{\delta^{\mathrm{E}}}^\star) = J^\star.
\end{split}
\end{equation}
The second last equality follows from \eqref{eq:prop_assumption1}-\eqref{eq:prop_assumption3}, which are inherently satisfied by the ADMM, as shown in \cite{Boyd}.
\end{proof}

The proposed algorithm is referred to as the \textbf{Consensus ADMM with Bounds} (\textbf{CADMMB}) and is summarized in Fig.~\ref{fig:Consensus_ADMM_bounded}. A tolerance parameter $\epsilon^{\mathrm{UB}}$ is used to guide whether the upper bound update step should be performed at each iteration. The termination criterion is given in \eqref{eq:CADMMB_termination_criterion}:
\begin{equation}
\label{eq:CADMMB_termination_criterion}
{J^{\mathrm{gap}}}^k = 100 \frac{\left|{J^{\mathrm{UB}}}^k - {J^{\mathrm{LB}}}^k\right|}{\left|{J^{\mathrm{UB}}}^k\right|} \leq \epsilon^{\mathrm{gap}} \text{ or } k \geq \overline{K},
\end{equation}
where $\epsilon^{\mathrm{gap}}$ and $\overline{K}$ are parameters of the algorithm.

A key advantage of this approach, in contrast to heuristic applications of distributed methods, lies in the availability of \textit{trustworthy} bounds on the global optimum: the solution provided by the CADMMB is $\epsilon^{\mathrm{gap}}$-suboptimal, meaning its objective value is at most $\epsilon^{\mathrm{gap}}$ (e.g., 0.01\%) worse than the value achieved by the optimal solution. In other words, the CADMMB offers a \textit{performance guarantee} by providing solutions that operators can confidently evaluate and trust.

\begin{figure}[!t]
\centering
\includegraphics[scale=.065]{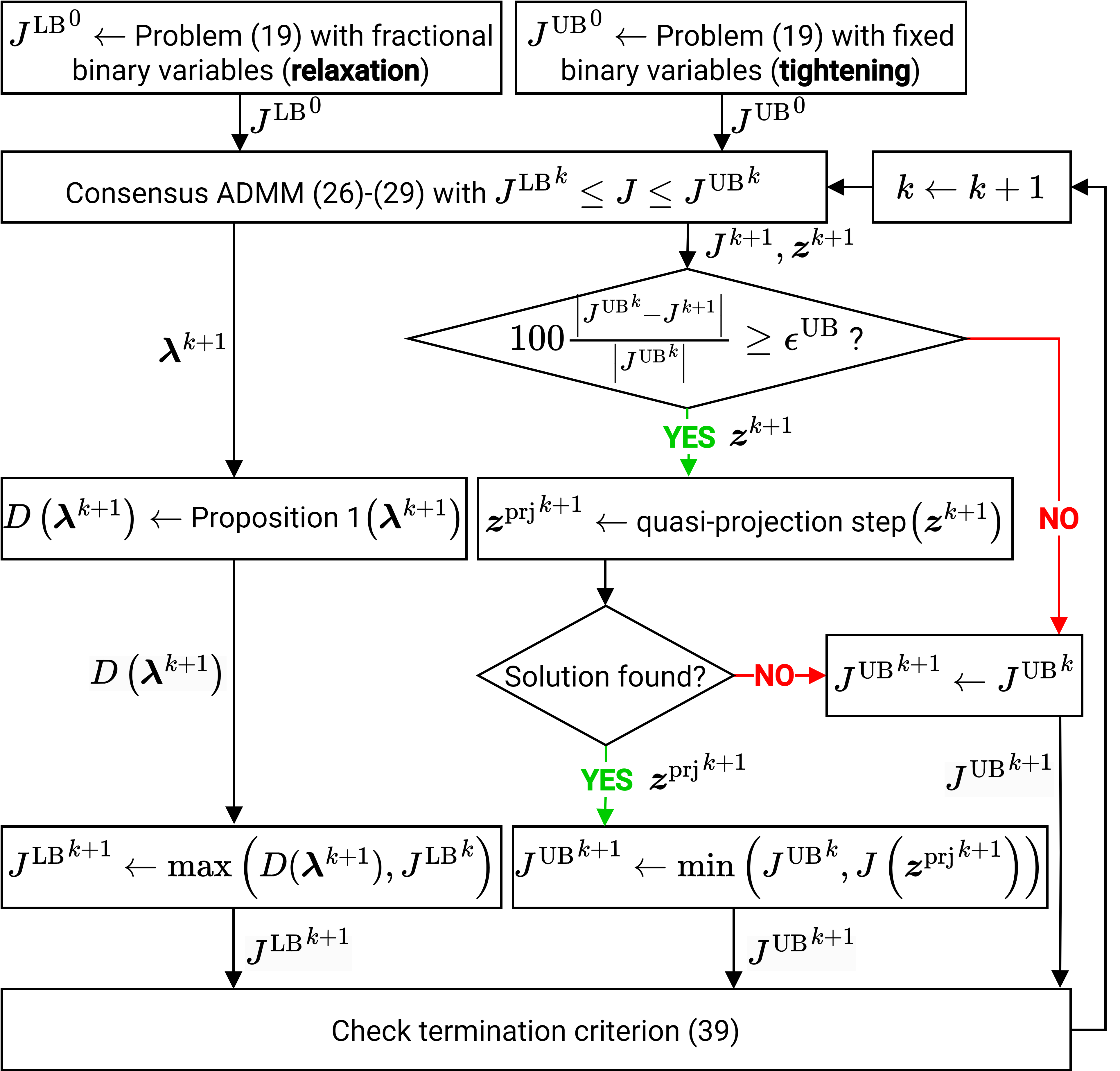}
    \caption{Illustration of the proposed CADMMB algorithm.}
    \label{fig:Consensus_ADMM_bounded}
\end{figure}

\begin{table}[!t]
\caption{Characterization of the uncertainties.}
\label{tab:uncertainty_characterization}
\centering
\begin{tabular}{|c|c|c|c|}
\hline
\multirow{ 2}{*}{\textbf{Uncertainty}} & \multicolumn{3}{c|}{\textbf{Average hourly values (standard deviation)}} \\
\cline{2-4}
& \textbf{February} & \textbf{March} & \textbf{April}\\
\hline
$\hat{\pi}^{\mathrm{E}}$ (\euro/MWh) & 51.1 (12.4) & 35.4 (9.8) & 34.7 (8.8)\\
$\hat{\pi}^{\mathrm{E}\uparrow}$ (\euro/MWh)  & 58.1 (13.9) & 38.6 (10.7) & 38.6 (9.7)\\
$\hat{\pi}^{\mathrm{E}\downarrow}$ (\euro/MWh)  & 49.1 (12.8) & 32.3 (8.4) & 28.5 (8.3)\\
$\hat{Q}^{\mathrm{ext}}$ (m$^3$/s)  & 1367.8 (456.1) & 1932.2 (549.3) & 798.68 (219.0)\\
$\hat{P}^\mathrm{S}$ (MWh) & 6.2 (10.7) & 9.6 (14.1) & 13.5 (17.4)\\
$\hat{P}^\mathrm{W}$ (MWh) & 91.5 (61.9) & 84.6 (57.1) & 52.8 (29.5)\\
\hline
\end{tabular}
\end{table}

\begin{table}[!t]
\caption{Characterization of the hydropower cascade.}
\label{tab:hydro_cascade_characterization}
\centering
\begin{tabular}{|c|c|c|c|}
\hline
\multirow{2}{*}{\textbf{Asset name}} & \textbf{Average annual} & \textbf{Head} & \textbf{Capacity}\\
& \textbf{energy output (GWh)} & \textbf{(m)} & \textbf{(MW)}\\
\hline
Pierre-Bénite & 528 & 9.0 & 80\\
Vaugris & 332 & 6.7 & 72\\
Péage de Roussillon & 885 & 12.2 & 160\\
Saint Vallier & 668 & 11.5 & 120\\
Bourg lès Valence & 1082 & 11.7 & 180\\
Beauchastel & 1211 & 11.8 & 192\\
Baix le Logis Neuf & 1177 & 11.7 & 192\\
Montélimar & 1575 & 16.5 & 275\\
Donzère-Mondragon & 2032 & 22.5 & 349\\
Caderousse & 843 & 8.6 & 187\\
Avignon & 857 & 9.5 & 186\\
Vallabrègues & 1269 & 11.3 & 210\\
\hline
\end{tabular}
\end{table}

\section{Simulation Results and Discussion}
\label{sec:results}
This section presents simulation results over three months (February-April 2017), covering diverse inflow conditions. The uncertainties are characterized in Table~\ref{tab:uncertainty_characterization}.
Compagnie Nationale du Rhône's analysis shows that the uncertainties in vRES, inflows, and prices are minimally correlated and can be treated as independent \cite{santosuosso2025distributed}.
Based on this finding, distinct sets of scenarios are generated for each uncertainty source, following the methodology utilized in the real-world operations of Compagnie Nationale du Rhône \cite{santosuosso2025distributed}.
A scenario reduction process is then applied, wherein a random subset of the original set of scenarios is selected, with equal probability assigned to each scenario.
To ensure accurate uncertainty representation, the selected scenarios must adequately capture the dispersion of the original set. Consequently, an out-of-sample validation procedure is employed to determine the minimum number of scenarios necessary to adequately represent the underlying uncertainty \cite{ARRIGO2022304}.
It is important to note that various methods exist for both scenario generation and reduction. A comprehensive discussion of these methods is outside the scope of this study but can be found in the existing literature dedicated to this topic \cite{LI2022107722}.
However, it is worth emphasizing that no scenario generation method can generally guarantee a perfect a priori characterization of the uncertainty. 
Therefore, the significance of the analysis presented here lies in its out-of-sample validation results, which assess the reliability of the selected scenarios in accurately representing the underlying uncertainty, irrespective of the methods employed for scenario generation and reduction.

The hydropower cascade configuration is shown in Fig.~\ref{fig:vpp} and detailed in Table~\ref{tab:hydro_cascade_characterization}. Simulations use historical day-ahead market prices in France, with the bidding problem solved iteratively each day within a 4-hour optimization window.
A piecewise linearization of the operational curves in Fig.~\ref{fig:operational_curve} is employed using 40 equally spaced segments, i.e., $|\boldsymbol{S}^{\boldsymbol{\mathrm{oc}}}_{n}| = 40$ for all $n \in \boldsymbol{N}$.
In the CADMMB, both $\epsilon^{\mathrm{gap}}$ and $\epsilon^{\mathrm{UB}}$ are set to 0.01\%, while $\overline{K} = 5000$. The algorithm runs on an Intel i7 processor using Gurobi 9.5.0 as the solver.
The proposed CADMMB algorithm, illustrated in Fig.~\ref{fig:Consensus_ADMM_bounded}, is implemented using the Python’s multiprocessing module \cite{multiprocessing}, enabling fully parallel execution of the distributed subproblems. Each subproblem is assigned to an independent process with separate memory allocation. This design ensures true parallelism and reflects realistic operational conditions, as the derived subproblems correspond to distinct units in the hydropower cascade and are expected to run on separate computational devices with isolated memory.

\begin{figure}[!t]
\centering
\includegraphics[scale=.44]{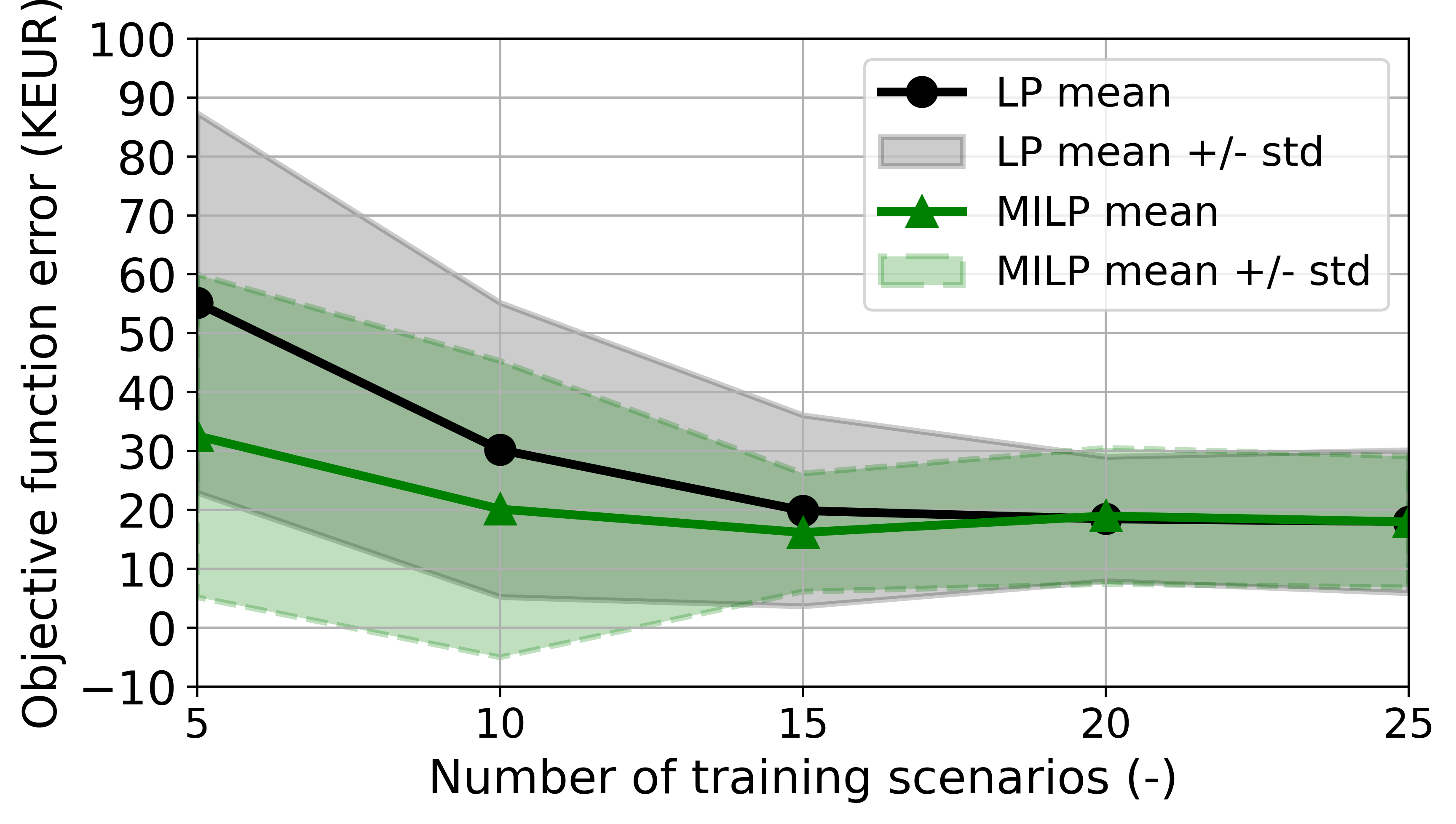}
    \caption{Out-of-sample validation results considering 2000 testing scenarios.}
    \label{fig:out_of_sample_validation}
\end{figure}

\begin{figure}[!t]
\centering
\includegraphics[scale=.49]{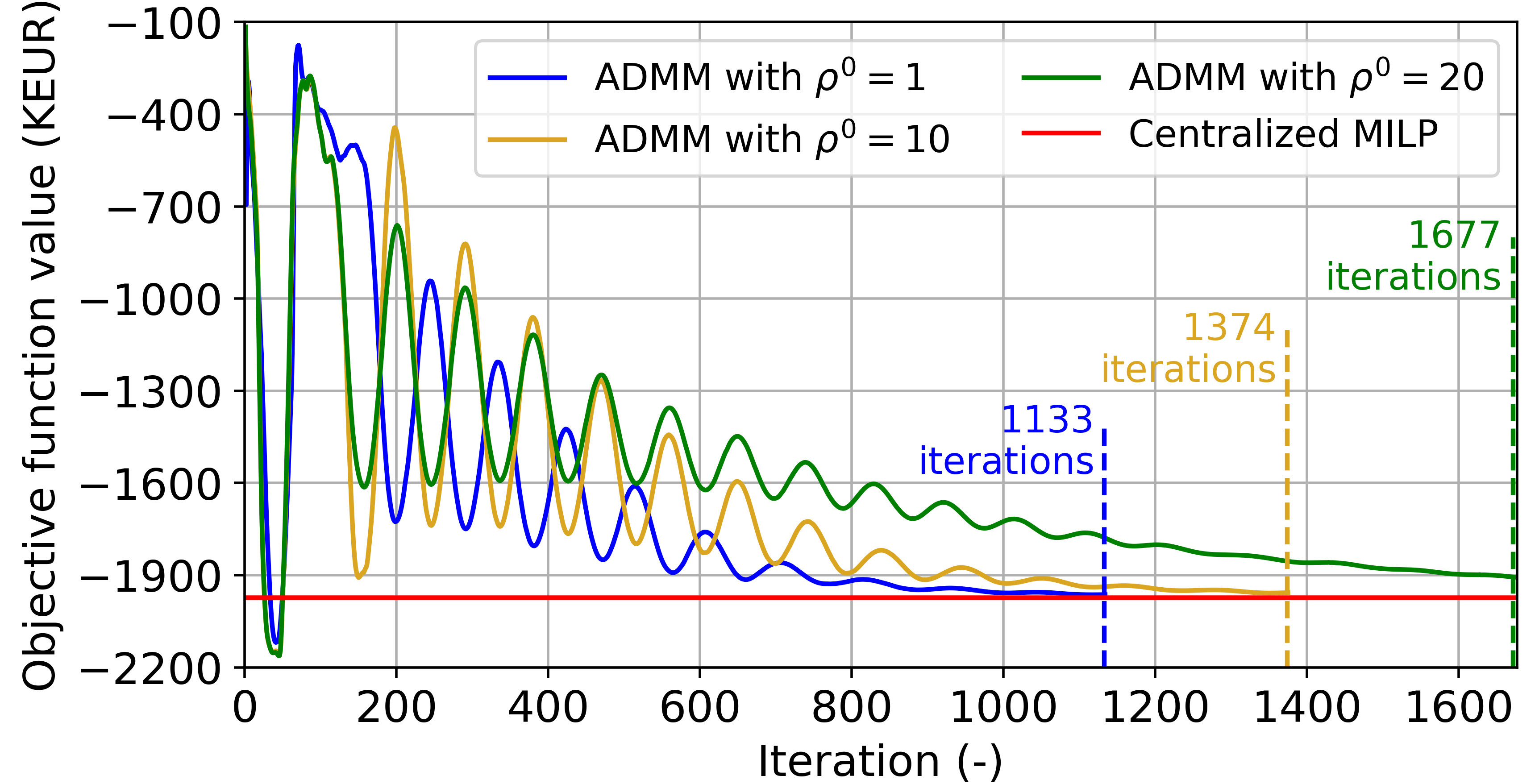}
    \caption{Example of convergence of the standard consensus ADMM algorithm for various initial penalty parameter values, when considering 5 scenarios.}
    \label{fig:ADMM_small_example}
\end{figure}

\begin{figure}[!t]
\centering
\includegraphics[scale=.49]{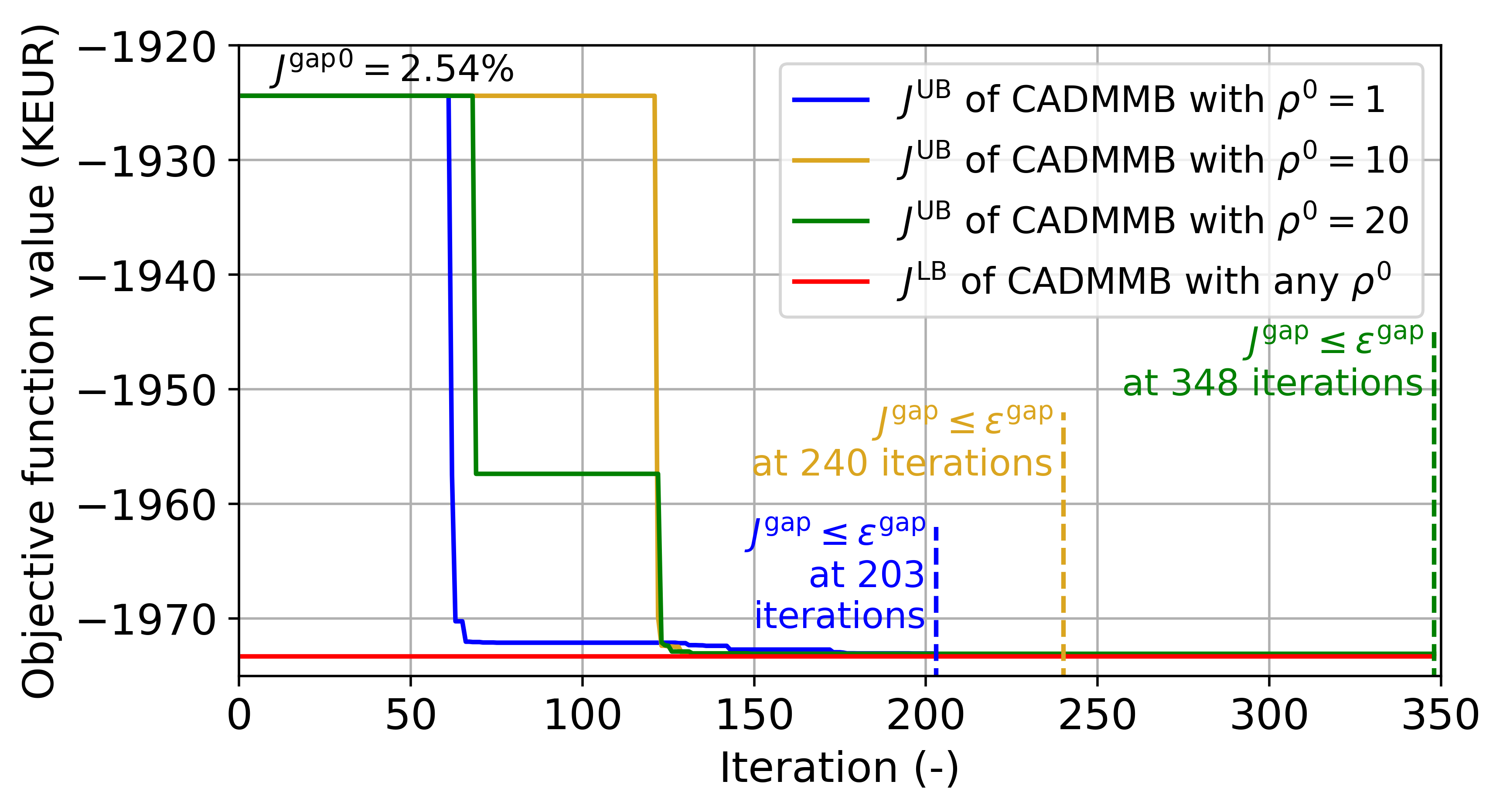}
    \caption{Example of convergence of the proposed CADMMB algorithm for various initial penalty parameter values, when considering 5 scenarios.}
    \label{fig:CADMMB_small_example}
\end{figure}

\subsection{Out-of-Sample Validation}
This subsection reports the out-of-sample validation results for the proposed stochastic bidding strategy in \eqref{eq:cen_optim_problem}. 
This standard procedure assesses the performance of the stochastic optimization tool \cite{ARRIGO2022304}. It involves solving problem \eqref{eq:cen_optim_problem} using a set of training scenarios, then evaluating the objective function \eqref{eq:cen_obj_fun} by applying the optimal decisions from the training to a new testing set, consisting of 2000 scenarios in this case. The difference between the training and testing objective function values is called the \textit{objective function error}. 

Fig.~\ref{fig:out_of_sample_validation} presents the observed out-of-sample validation results. For each selected number of training scenarios (5, 10, 15, 20, and 25, as indicated in the figure), 100 independent rounds of random sampling are conducted on the original set of generated scenarios. In each round, a distinct set of random samples is selected to train the proposed stochastic optimization tool, which then generates a corresponding training objective function value. The average daily difference between the training and testing objective function values is computed across all 100 rounds. This value is reported in the figure to assess the average out-of-sample performance of the proposed stochastic optimization tool for each selected number of training scenarios, accounting for the variability in outcomes arising from the random selection of training scenarios. The rationale for conducting multiple independent rounds of random scenario sampling for each specified number of training scenarios is to improve the robustness of the out-of-sample validation process.
Fig.~\ref{fig:out_of_sample_validation} reports both the observed mean and standard deviation (std) values.

Two distinct sets of out-of-sample validation results are presented in the figure. The first is obtained using the original formulation of the bidding strategy in \eqref{eq:cen_optim_problem}, modeled as a MILP. The second is derived from an LP relaxation of \eqref{eq:cen_optim_problem}, in which the integer variables are relaxed to take continuous values within the range $[0,1]$.
The results indicate that 20 scenarios are sufficient to minimize the error for both the MILP problem \eqref{eq:cen_optim_problem} and its LP relaxation. Using more than 20 scenarios raises computational complexity without significantly improving accuracy.

\subsection{Stylized Illustrative Example}
Before presenting the full case study, results from a stylized example with 5 scenarios are presented. Figs. \ref{fig:ADMM_small_example} and \ref{fig:CADMMB_small_example} show the convergence rates of the consensus ADMM and the proposed CADMMB, respectively, under different initial values of the penalty parameter $\rho$. The results reveal that the standard ADMM converges in less than 1140 iterations at best but exhibits undesirable oscillatory behavior. In contrast, Fig.~\ref{fig:CADMMB_small_example} shows that the CADMMB not only eliminates the oscillations but also achieves bounds convergence with 99.99\% accuracy in 203 iterations at best. The initial lower bound guess is already close to the global optimum and remains stable throughout the iterations. The initial gap between bounds is relatively small (2.54\%), highlighting the benefits of warm starting the algorithm with initial bounds, as shown in Fig.~\ref{fig:Consensus_ADMM_bounded}. While consensus ADMM shows a satisfactory performance, the actual accuracy of its solutions remains indeterminate without explicitly computing the global optimum through centralized optimization. In contrast, our trustworthy CADMMB not only provides bounds that quantify the ``distance" from the global optimum but also enables precise control over the desired accuracy by tuning the $\epsilon^{\mathrm{gap}}$ parameter.

\begin{figure}[!t]
\centering
\includegraphics[scale=.5]{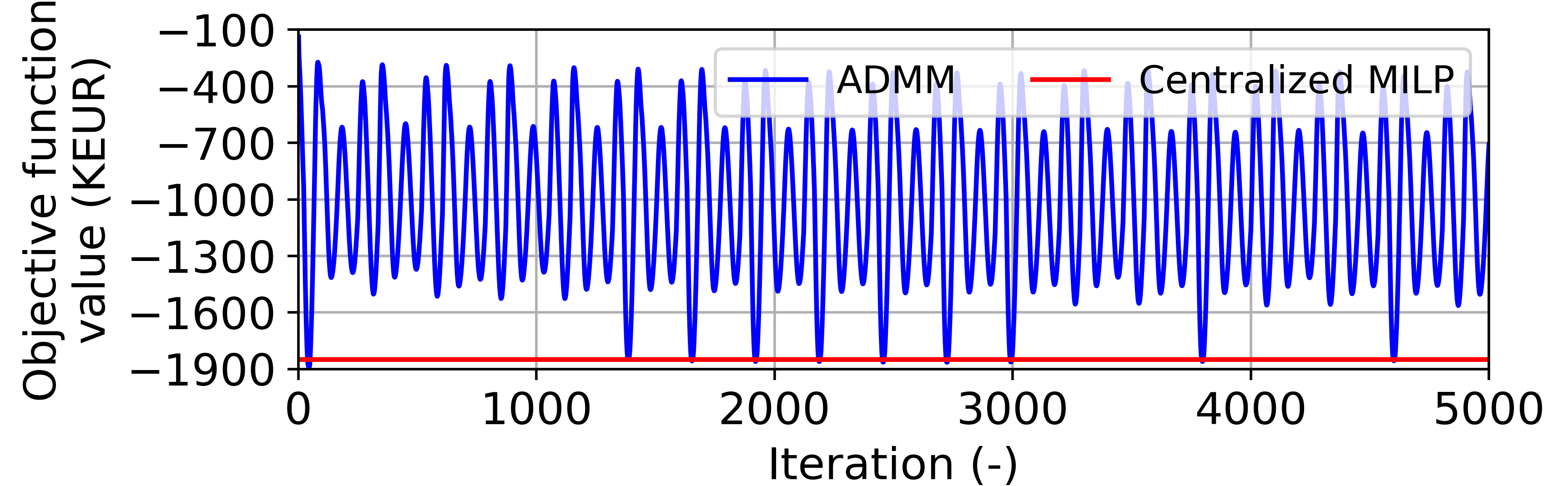}
    \caption{Example of ADMM performance when considering 20 scenarios.}
    \label{fig:ADMM_full_case}
\end{figure}

\begin{figure}[!t]
\centering
\includegraphics[scale=.49]{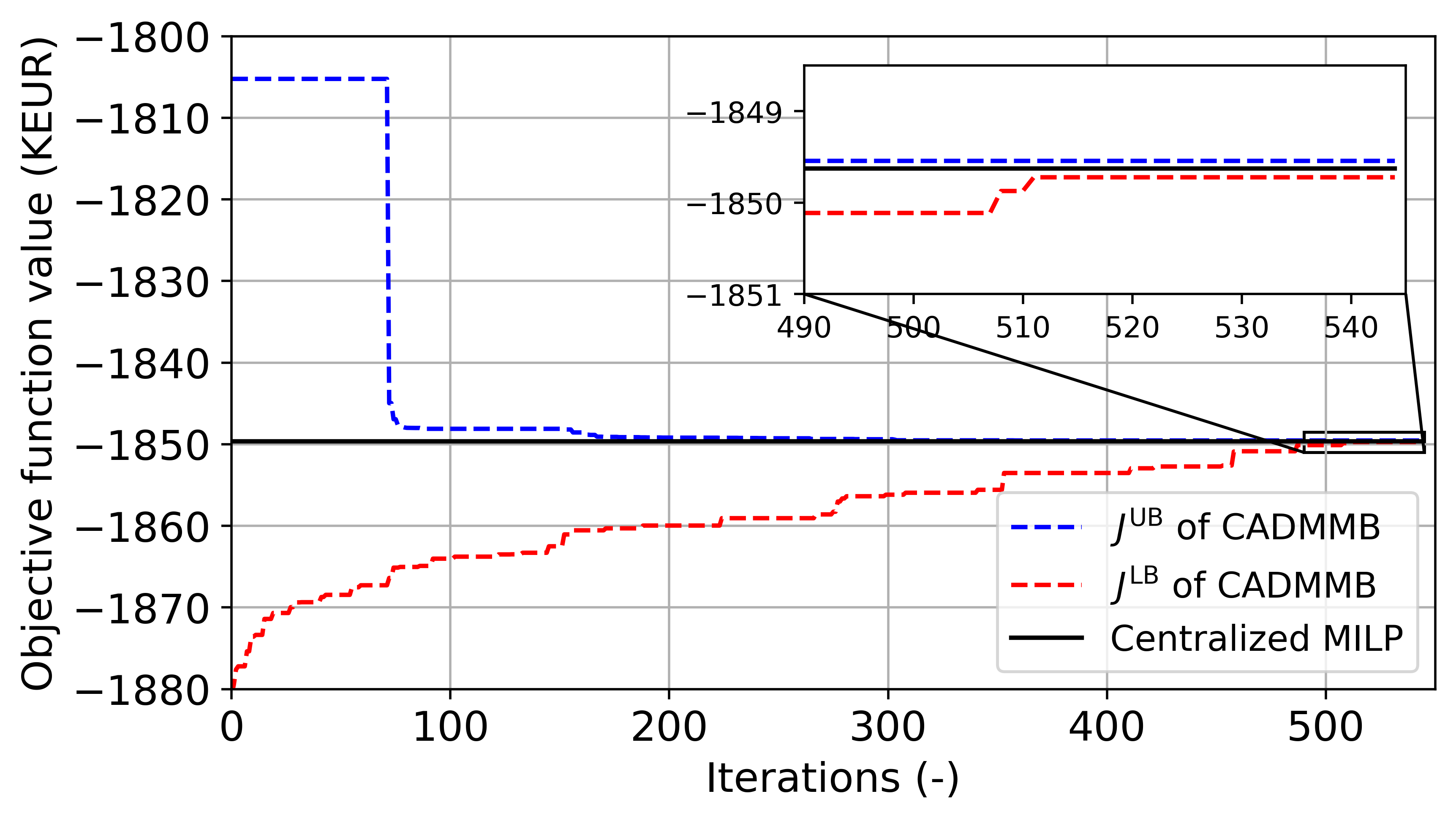}
    \caption{Example of CADMMB performance when considering 20 scenarios.}
    \label{fig:CADMMB_full_case}
\end{figure}

\begin{figure}[!t]
\centering
\includegraphics[scale=.49]{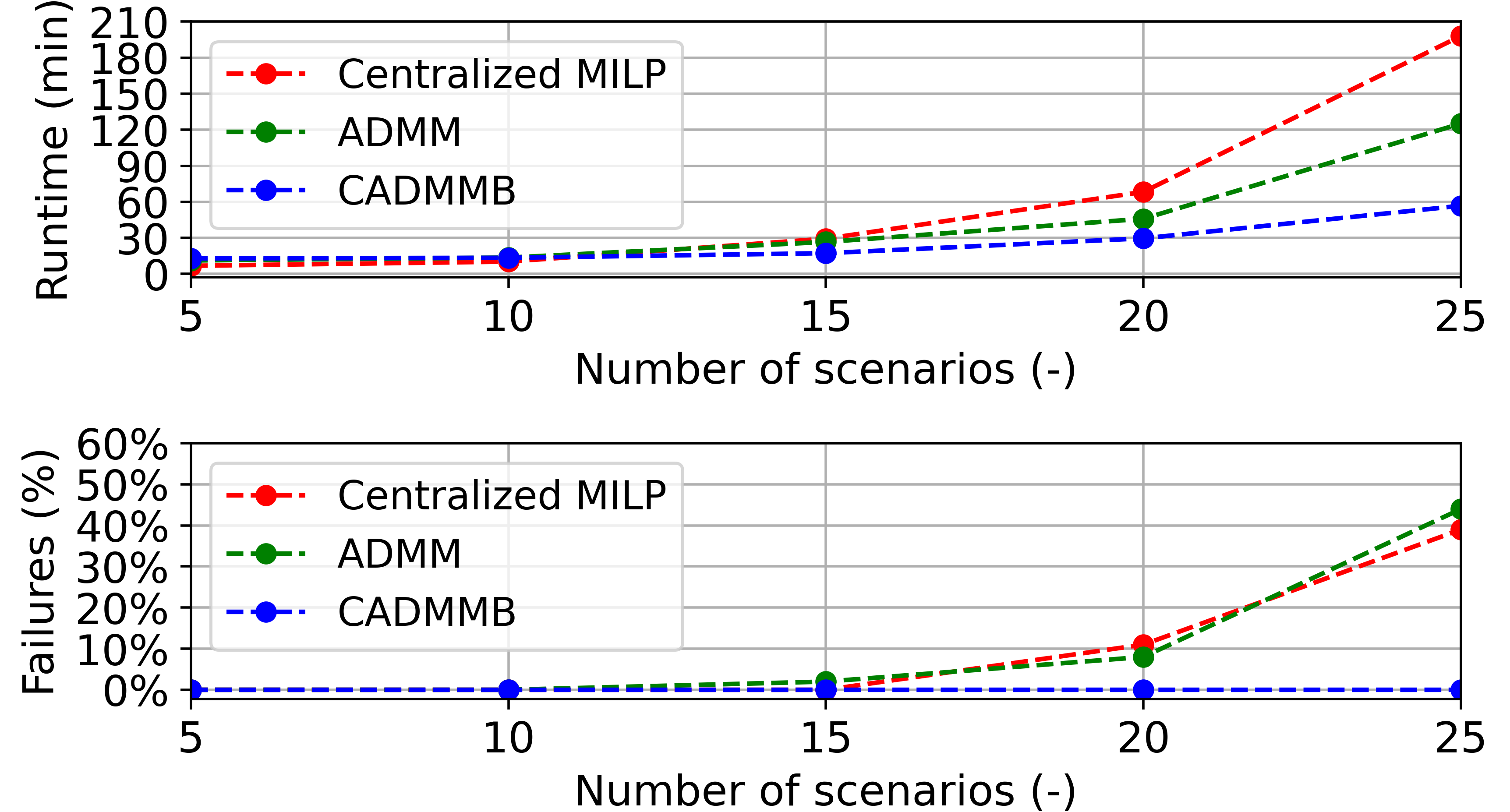}
    \caption{Comparison between centralized MILP, consensus ADMM, and CADMMB based on average runtime per scheduling period and failure rates (\% of total simulation days), as a function of the number of scenarios.}
    \label{fig:runtime_comparison_paper_plot}
\end{figure}

\begin{figure}[!t]
\centering
\includegraphics[scale=.51]{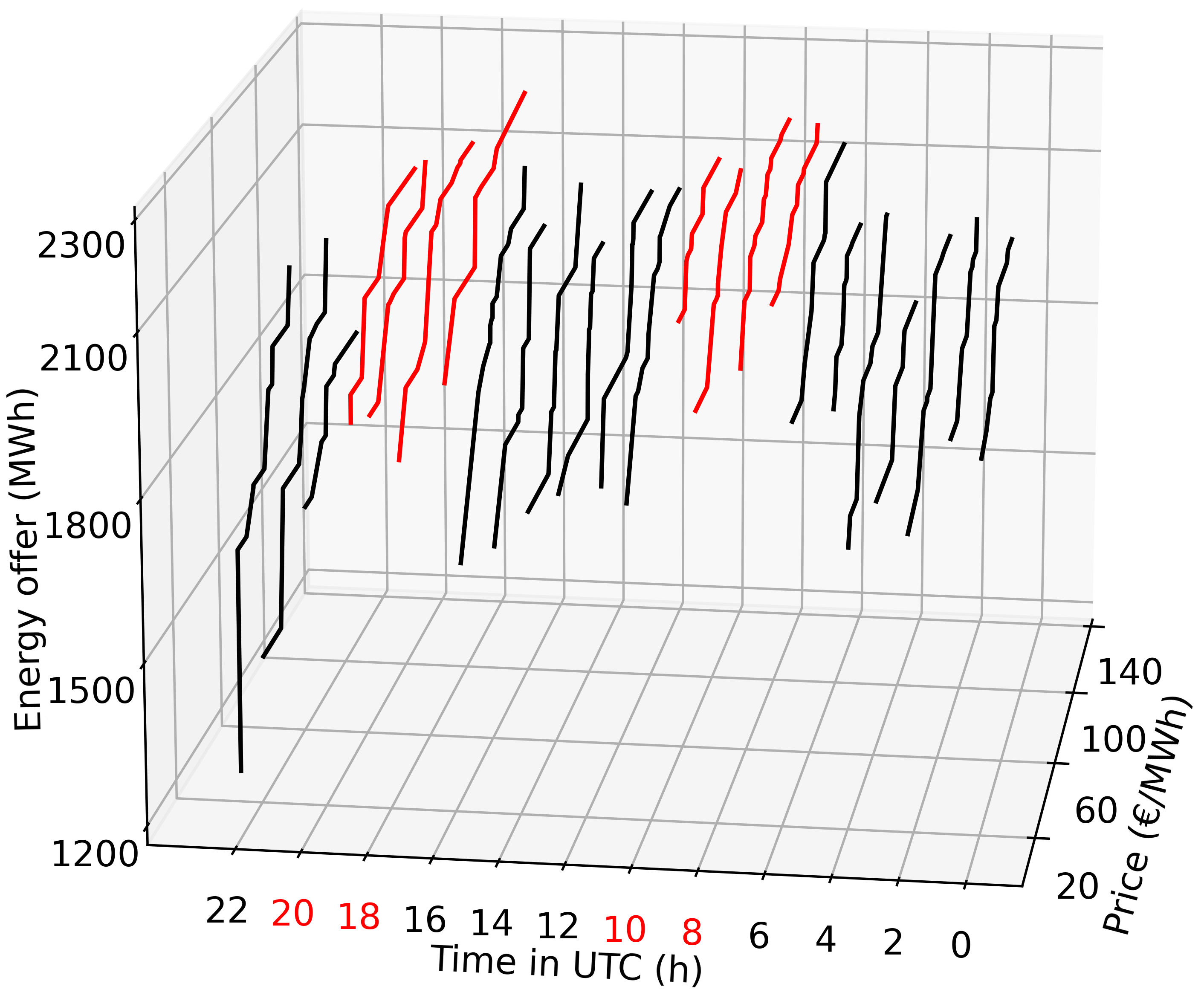}
    \caption{Example of price-quantity bidding curves derived via CADMMB. Peak price periods are highlighted in red.}
    \label{fig:price_quantity_curves_example}
\end{figure}

\begin{table}[!t]
\caption{Comparison between ex-post profit with fixed hourly bids and price-quantity bidding curves. In brackets, the relative difference with respect to fixed hourly bids.}
\label{tab:price_quantity_vs_fixed_bids}
\centering
\begin{tabular}{|c| c c|}
\hline
\multirow{ 2}{*}{\textbf{Num. scenarios}} & \multicolumn{2}{c|}{\textbf{Profit per MWh of energy produced (\euro)}} \\
\cline{2-3}
& \textbf{Fixed bids} & \textbf{Price-quantity bidding curves}\\
\hline
5 & 33.63 & 34.38 ($\boldsymbol{+ 2.2\%}$)\\
10 & 35.74 & 36.80 ($\boldsymbol{+ 3.0\%}$)\\
15 & 37.82 & 39.22 ($\boldsymbol{+ 3.7\%}$)\\
20 & 39.45 & 41.22 ($\boldsymbol{+ 4.5\%}$)\\
25 & 39.89 & 41.36 ($\boldsymbol{+ 3.7\%}$)\\
\hline
\end{tabular}
\end{table}

\subsection{Full Scale Case Study}
Figs. \ref{fig:ADMM_full_case} and \ref{fig:CADMMB_full_case} illustrate the performance of the standard consensus ADMM and the proposed CADMMB, respectively, when applied to the full scale case study involving 20 scenarios. As shown in Fig.~\ref{fig:ADMM_full_case}, the ADMM fails to address the complexity of the problem. In contrast, our CADMMB exhibits a robust performance, efficiently converging with 99.99\% accuracy in about 550 iterations. The initial lower bound estimate in this case is significantly distant from the optimum, necessitating iterative updates to refine $J^{\mathrm{LB}}$. Notably, the evolution of $J^{\mathrm{LB}}$ and $J^{\mathrm{UB}}$ throughout the iterations differs markedly. While $J^{\mathrm{LB}}$ is continuously updated based on the progressive evolution of the dual variables, $J^{\mathrm{UB}}$ is determined through a projection step into a new feasible space, which results in the large jump observed in Fig.~\ref{fig:CADMMB_full_case}.

Fig.~\ref{fig:runtime_comparison_paper_plot} generalizes these findings by showing the average runtime for solving the daily bidding problems and the number of failures—defined as the number of days for which the algorithm fails to provide a solution within the 4-hour optimization window—for different algorithms, as a function of the number of scenarios. In the reference case (20 scenarios), CADMMB reduces the average runtime by approximately 35\% compared to consensus ADMM and by 57\% compared to a traditional centralized optimization. Moreover, while the number of intractable instances explodes for both the standard ADMM and centralized algorithms as the number of scenarios grows, our trustworthy CADMMB consistently provides a distributed solution with over 99.99\% accuracy ($\epsilon^{\mathrm{gap}} = 0.01\%$).

Fig.~\ref{fig:price_quantity_curves_example} illustrates an example of the price-quantity bidding curves generated by our CADMMB. Despite the provided parallelization of the problem shown in Fig.~\ref{fig:decomposed_problem_example}, the CADMMB algorithm effectively coordinates both vRES and hydropower resources to optimize energy offers during daily peak price periods. These findings are generalized in Table~\ref{tab:price_quantity_vs_fixed_bids}, where it is reported that employing our CADMMB with price-quantity bidding curves enhances the average ex-post market profit of the VPP by up to 4.5\% compared to using a traditional centralized solution with fixed hourly bids.

Finally, Fig.~\ref{fig:hydro_vres_example} illustrates a typical daily operation of the last hydropower asset in the cascade using CADMMB, reflecting the overall behavior of the hydropower system. The figure presents three distinct inflow conditions: energetic, low-flow, and flood periods. Due to constraints imposed by hydropower operational curves (e.g., Fig.~\ref{fig:operational_curve}), the hydropower plant can fully respond to vRES energy drops by discharging the reservoir only during energetic periods when sufficient storage capacity is available. During low-flow periods, CADMMB still enables to address the most significant vRES energy drops, while during flood periods, the hydropower plants are constrained to operate as run-of-the-river assets with no storage capacity.

These findings are generalized in Table~\ref{tab:profit_inflow_periods}, which reports the average ex-post market profit achieved by the proposed CADMMB under varying inflow conditions. The simulation hours are grouped into three inflow regimes: energetic, low-flow, and flood periods, with corresponding average profits reported. The highest profits occur during energetic periods, when ample storage enables full utilization of the hydropower flexibility. In contrast, the limited flexibility during low-flow and flood periods significantly reduces the VPP profitability.

\begin{figure}[!t]
\centering
\includegraphics[scale=.35]{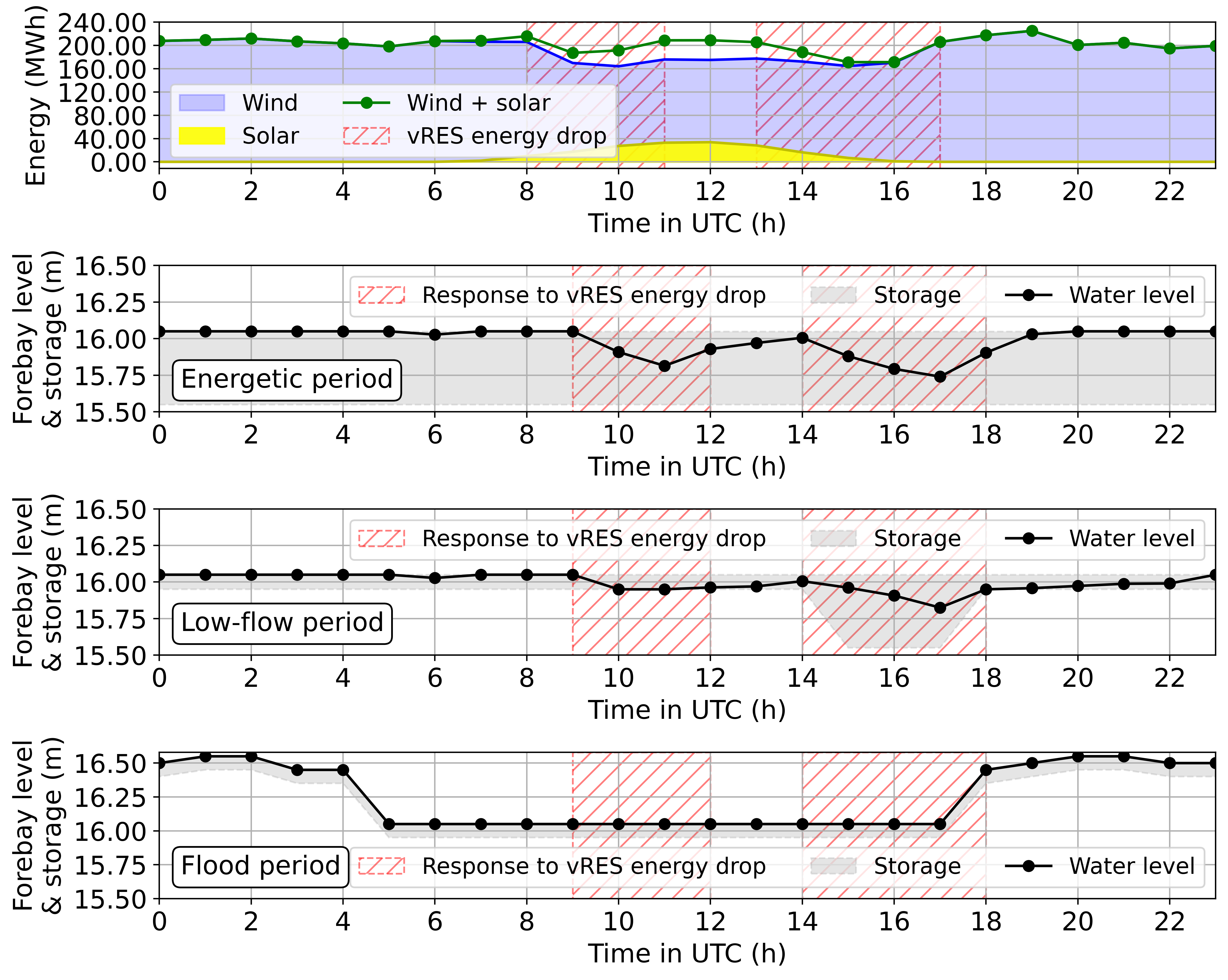}
    \caption{Example of coordination between the hydropower cascade and vRES under various inflow conditions when the proposed CADMMB is employed.}
    \label{fig:hydro_vres_example}
\end{figure}

\begin{table}[!t]
\caption{Comparison of ex-post profits observed across three distinct inflow conditions: energetic, low-flow, and flood.}
\label{tab:profit_inflow_periods}
\centering
\begin{tabular}{|c|c c c|}
\hline
\multirow{ 2}{*}{\textbf{Num. scenarios}} & \multicolumn{3}{c|}{\textbf{Profit per MWh of energy produced (\euro)}} \\
\cline{2-4}
& \textbf{Energetic} & \textbf{Low-flow} & \textbf{Flood}\\
\hline
5 & 38.81 & 33.56 & 31.16\\
10 & 41.58 & 35.99 & 33.13\\
15 & 44.88 & 38.56 & 34.79\\
20 & 48.03 & 40.05 & 35.31\\
25 & 48.63 & 40.39 & 35.32\\
\hline
\end{tabular}
\end{table}

\section{Conclusion and Future Work}
\label{sec:conclusions}
This study introduces a stochastic ADMM-based distributed algorithm for the co-optimization of vRES and CHPP in the day-ahead market. First, the original MILP problem is decomposed across spatial and scenario dimensions. A distributed strategy is then applied to derive trustworthy bounds on the global optimum, with mathematical proof provided to demonstrate their existence and validity. Unlike a conventional ADMM application, which can lead to heuristic methods for non-convex problems, the proposed CADMMB algorithm offers a performance guarantee. Testing in collaboration with the French aggregator Compagnie Nationale du Rhône shows the algorithm’s effectiveness in managing complex aggregations. The CADMMB achieves a 35\% reduction in average runtime compared to consensus ADMM and a 57\% reduction compared to traditional centralized optimization. Additionally, CADMMB consistently delivers solutions with over 99.99\% accuracy, whereas both centralized and consensus ADMM methods fail to solve the problem. Future research will aim to refine initial bound guess strategies, explore the effects of parameter tuning, extend the analysis to multi-market trading case, and integrate asynchronous communication to enhance
the robustness of the proposed distributed algorithm.

\vfill


\begin{thebibliography}{1}
\bibliographystyle{IEEEtran}

\bibitem{Koraki}
D. Koraki, and K. Strunz, ``Wind and solar power integration in electricity markets and distribution networks through service-centric virtual power plants," \textit{IEEE Trans. Power Syst.}, vol. 33, no. 1, pp. 473-485, Jan. 2018.

\bibitem{Hydropower_Special_Market_Report}
International Energy Agency, ``Hydropower special market report," 2021. [Online]. Available: https://www.iea.org/reports/hydropower-special-market-report. [Accessed: Oct. 9, 2024].

\bibitem{Zhang1}
J. Zhang, C. Cheng, S. Yu, and H. Su, ``Chance-constrained co-optimization for day-ahead generation and reserve scheduling of cascade hydropower–variable renewable energy hybrid systems," \textit{Appl. Energy}, vol. 324, pp. 119732, Jul. 2022.

\bibitem{Zhao}
Z. Zhao, C. Cheng, S. Liao, Y. Li, and Q. Lü, ``A MILP based framework for the hydro unit commitment considering irregular forbidden zone related constraints," \textit{IEEE Trans. Power Syst.}, vol. 36, no. 3, pp. 1819-1832, May 2021.

\bibitem{Apostolopoulou1}
D. Apostolopoulou, Z. De Grève, and M. McCulloch, ``Robust optimization for hydroelectric system operation under uncertainty," \textit{IEEE Trans. Power Syst.}, vol. 33, no. 3, pp. 3337-3348, May 2018.

\bibitem{Zhou2}
B. Zhou, G. Geng, and Q. Jiang, ``Hydro-thermal-wind coordination in day-ahead unit commitment," \textit{IEEE Trans. Power Syst.}, vol. 31, no. 6, pp. 4626-4637, Nov. 2016.

\bibitem{Qiu}
Y. Qiu, J. Lin, F. Liu, Y. Song, G. Chen, and L. Ding, ``Stochastic online generation control of cascaded run-of-the-river hydropower for mitigating solar power volatility," \textit{IEEE Trans. Power Syst.}, vol. 35, no. 6, pp. 4709-4722, Nov. 2020.

\bibitem{Fleten}
S. E. Fleten, and T. K. Kristoffersen, ``Stochastic programming for optimizing bidding strategies of a Nordic hydropower producer,” \textit{Eur. J. Oper. Res.}, vol. 181, no. 2, pp. 916–928, Sep. 2007.

\bibitem{Roald}
L. A. Roald, D. Pozo, A. Papavasiliou, D. K. Molzahn, J. Kazempour, and A. J. Conejo, ``Power systems optimization under uncertainty: A review of methods and applications," \textit{Elect. Power Syst. Res.}, vol. 214, pp. 108725, Jan. 2023.

\bibitem{LI2023109379}
Z. Li, P. Yang, Y. Yang, G. Lu, and Y. Tang, ``Hydropower preventive maintenance scheduling in a deregulated market: A multi-stage stochastic programming approach," \textit{Electr. Power Syst. Res.}, vol. 221, pp. 109379, Aug. 2023.

\bibitem{ZHONG2024179}
Z. Zhong, N. Fan, and L. Wu, ``Multistage stochastic optimization for mid-term integrated generation and maintenance scheduling of cascaded hydroelectric system with renewable energy uncertainty," \textit{Eur. J. Oper. Res.}, vol. 318, no. 1, pp. 179-199, Oct. 2024.

\bibitem{LIU2023109327}
Y. Liu, X. Chen, N. Fan, Z. Zhao, and L. Wu, ``Stochastic day-ahead operation of cascaded hydropower systems with Bayesian neural network-based scenario generation: A Portland general electric system study," \textit{Int. J. Electr. Power Energy Syst.}, vol. 153, pp. 109327, Nov. 2023.

\bibitem{Helseth1}
A. Helseth, M. Fodstad, and B. Mo, ``Optimal medium-term hydropower scheduling considering energy and reserve capacity markets," \textit{IEEE Trans. Sustain. Energy}, vol. 7, no. 3, pp. 934-942, Jul. 2016.

\bibitem{Wang}
C. Wang \textit{et al.}, ``Resilience-based coordinated scheduling of cascaded hydro power with sequential heavy precipitation," \textit{IEEE Trans. Sustain. Energy}, vol. 14, no. 2, pp. 1299-1311, Apr. 2023.

\bibitem{Shi}
Y. Shi, H. Wang, C. Li, M. Negnevitsky, and X. Wang, ``Stochastic optimization of system configurations and operation of hybrid cascade hydro-wind-photovoltaic with battery for uncertain medium- and long-term load growth," \textit{Appl. Energy}, vol. 364, pp. 123127, Jun. 2024.

\bibitem{Zhang2}
H. Zhang, Z. Lu, W. Hu, Y. Wang, L. Dong, and J. Zhang, ``Coordinated optimal operation of hydro–wind–solar integrated systems," \textit{Appl. Energy}, vol. 242, pp. 883-896, May 2019.

\bibitem{Mazzi}
N. Mazzi, J. Kazempour, and P. Pinson, ``Price-taker offering strategy in electricity pay-as-bid markets," \textit{IEEE Trans. Power Syst.}, vol. 33, no. 2, pp. 2175-2183, Mar. 2018.

\bibitem{Karasavvidis}
M. Karasavvidis, D. Papadaskalopoulos, and G. Strbac, ``Optimal offering of energy storage in electricity markets with loop blocks," \textit{IEEE Trans. Power Syst.}, vol. 39, no. 1, pp. 299-315, Jan. 2024.

\bibitem{Yu}
Y. Yu, Y. Wu, and Q. Sheng, ``Optimal scheduling strategy of cascade hydropower plants under the joint market of day-ahead energy and frequency regulation," \textit{IEEE Access}, vol. 9, pp. 87749-87762, Jun. 2021.

\bibitem{Séguin}
S. Séguin, P. Côté, and C. Audet, ``Self-scheduling short-term unit commitment and loading problem," \textit{IEEE Trans. Power Syst.}, vol. 31, no. 1, pp. 133-142, Jan. 2016.

\bibitem{Shayesteh}
E. Shayesteh, M. Amelin, and L. Söder, ``Multi-station equivalents for short-term hydropower scheduling," \textit{IEEE Trans. Power Syst.}, vol. 31, no. 6, pp. 4616-4625, Nov. 2016.

\bibitem{Guedes}
L. S. M. Guedes, P. de Mendonça Maia, A. C. Lisboa, D. A. G. Vieira, and R. R. Saldanha, ``A unit commitment algorithm and a compact MILP model for short-term hydro-power generation scheduling," \textit{IEEE Trans. Power Syst.}, vol. 32, no. 5, pp. 3381-3390, Sep. 2017.

\bibitem{Helseth2}
A. Helseth, and B. Mo, ``Hydropower aggregation by spatial decomposition—An SDDP approach," \textit{IEEE Trans. Sustain. Energy}, vol. 14, no. 1, pp. 381-392, Jan. 2023.

\bibitem{López-Salgado}
C. J. López-Salgado, O. Añó, and D. M. Ojeda-Esteybar, ``Stochastic unit commitment and optimal allocation of reserves: A hybrid decomposition approach," \textit{IEEE Trans. Power Syst.}, vol. 33, no. 5, pp. 5542-5552, Sep. 2018.

\bibitem{Apostolopoulou2}
D. Apostolopoulou, and M. McCulloch, ``Optimal short-term operation of a cascaded hydro-solar hybrid system: A case study in Kenya," \textit{IEEE Trans. Sustain. Energy}, vol. 10, no. 4, pp. 1878-1889, Oct. 2019.

\bibitem{Lu}
N. Lu, G. Wang, C. Su, Z. Ren, X. Peng, and Q. Sui, ``Medium- and long-term interval optimal scheduling of cascade hydropower-photovoltaic complementary systems considering multiple uncertainties," \textit{Appl. Energy}, vol. 353, pp. 122085, Jan. 2024.

\bibitem{Finardi}
E. C. Finardi, E. L. Silva, and C. Sagastizabal, ``Solving the unit commitment problem of hydro plants via lagrangian relaxation and sequential quadratic programming,” \textit{Computational Appl. Math}, vol. 24, pp. 317–341, Dec. 2005.

\bibitem{Takriti}
S. Takriti, J. R. Birge, and E. Long, ``A stochastic model for the unit commitment problem," \textit{IEEE Trans. Power Syst.}, vol. 11, no. 3, pp. 1497-1508, Aug. 1996.

\bibitem{Huang}
J. Huang, X. Wu, Z. Zheng, Y. Huang, and W. Li, ``Multi-objective optimal operation of combined cascade reservoir and hydrogen system," \textit{IEEE Trans. Ind Appl.}, vol. 58, no. 2, pp. 2836-2847, Apr. 2022.

\bibitem{Tong}
B. Tong, Q. Zhai, and X. Guan, ``An MILP based formulation for short-term hydro generation scheduling with analysis of the linearization effects on solution feasibility," \textit{IEEE Trans. Power Syst.}, vol. 28, no. 4, pp. 3588-3599, Nov. 2013.

\bibitem{Egging}
R. Egging, S.-E. Fleten, I. Grønvik, A. Hadziomerovic, and N. Ingvoldstad, ``Linear decision rules for hydropower scheduling under uncertainty," \textit{IEEE Trans. Power Syst.}, vol. 32, no. 1, pp. 103-113, Jan. 2017.

\bibitem{Scuzziato}
M. R. Scuzziato, E. C. Finardi, and A. Frangioni, ``Comparing spatial and scenario decomposition for stochastic hydrothermal unit commitment problems," \textit{IEEE Trans. Sustain. Energy}, vol. 9, no. 3, pp. 1307-1317, Jul. 2018.

\bibitem{Helseth3}
A. Helseth, S. Jaehnert, and A. L. Diniz, ``Convex relaxations of the short-term hydrothermal scheduling problem," \textit{IEEE Trans. Power Syst.}, vol. 36, no. 4, pp. 3293-3304, Jul. 2021.

\bibitem{Ma}
Y. Ma \textit{et al.}, ``Decentralized monthly generation scheduling of cascade hydropower plants in multiple time scale markets," \textit{Int. J. Elect. Power Energy Syst.}, vol. 135, pp. 107420, Feb. 2022.

\bibitem{Santosuosso2}
L. Santosuosso, S. Camal, A. Lett, G. Bontron, and G. Kariniotakis, ``Distributed economic model predictive control for the joint energy dispatch of wind farms and run-of-the-river hydropower plants,” \textit{Elect. Power Syst. Res.}, vol. 235, pp. 110805, Oct. 2024.

\bibitem{Gade}
D. Gade, G. Hackebeil, S. M. Ryan, J. P. Watson, R. J. B. Wets, and D. L. Woodruff, ``Obtaining lower bounds from the progressive hedging algorithm for stochastic mixed-integer programs," \textit{Math. Program.}, vol. 157, pp. 47–67, Apr. 2016.

\bibitem{Piron}
V. Piron, G. Bontron, and M. Pochat, ``Operating a hydropower cascade to optimize energy management,” \emph{Hydropower \& Dams}, vol. 22, no. 5, pp. 2-6, Sep. 2015.

\bibitem{7762908}
A. Baringo, and L. Baringo, ``A stochastic adaptive robust optimization approach for the offering strategy of a virtual power plant," \textit{IEEE Trans. Power Syst.}, vol. 32, no. 5, pp. 3492-3504, Sept. 2017.

\bibitem{8584080}
N. Mazzi, A. Trivella, and J. M. Morales, ``Enabling active/passive electricity trading in dual-price balancing markets," \textit{IEEE Trans. Power Syst.}, vol. 34, no. 3, pp. 1980-1990, May 2019.

\bibitem{Steeger}
G. Steeger, L. A. Barroso, and S. Rebennack, ``Optimal bidding strategies for hydro-electric producers: A literature survey," \textit{IEEE Trans. Power Syst.}, vol. 29, no. 4, pp. 1758-1766, Jul. 2014.

\bibitem{anagnostopoulos2007optimal}
J. S. Anagnostopoulos, and D. E. Papantonis, ``Optimal sizing of a run-of-river small hydropower plant,” \textit{Energy Convers. Manag.}, vol. 48, no. 10, pp. 2663--2670, Oct. 2007.

\bibitem{mccormick1976computability}
G. P. McCormick, ``Computability of global solutions to factorable nonconvex programs: Part I—Convex underestimating problems," \textit{Math. Program.}, vol. 10, no. 1, pp. 147--175, Dec. 1976.

\bibitem{najman2019tightness}
J. Najman, and A. Mitsos, ``On tightness and anchoring of McCormick and other relaxations," \textit{J. Global Optim.}, vol. 74, pp. 677--703, Dec. 2017.

\bibitem{blom2024single}
E. Blom, and L. S{\"o}der, ``Single-level reduction of the hydropower area equivalent bilevel problem for fast computation," \textit{Renew. Energy}, vol. 225, pp. 120229, May 2024.

\bibitem{flamm2020two}
B. Flamm, A. Eichler, J. Warrington, and J. Lygeros, ``Two-stage dual dynamic programming with application to nonlinear hydro scheduling," \textit{IEEE Trans. Control Syst. Technol.}, vol. 29, no. 1, pp. 96--107, Jan. 2021.

\bibitem{Boyd}
S. Boyd, N. Parikh, E. Chu, B. Peleato, and J. Eckstein, ``Distributed optimization and statistical learning via the alternating direction method of multipliers,” \textit{Found. Trends Mach. Learn.}, vol. 3, no. 1, pp. 1–122, Nov. 2011.

\bibitem{santosuosso2025distributed}
L. Santosuosso, ``Distributed stochastic optimization for operating complex virtual power plants: Leveraging cascaded run-of-the-river hydropower flexibility for renewable energy integration,” Ph.D. dissertation, Université Paris Sciences et Lettres, Paris, France, 2025.

\bibitem{ARRIGO2022304}
A. Arrigo, C. Ordoudis, J. Kazempour, Z. De Grève, J.F. Toubeau, and F. Vallée, ``Wasserstein distributionally robust chance-constrained optimization for energy and reserve dispatch: An exact and physically-bounded formulation," \textit{Eur. J. Oper. Res.}, vol. 296, no. 1, pp. 304-322, Jan. 2022.

\bibitem{LI2022107722}
H. Li \textit{et al.}, ``A review of scenario analysis methods in planning and operation of modern power systems: Methodologies, applications, and challenges," \textit{Electr. Power Syst. Res.}, vol. 205, pp. 107722, Apr. 2022.

\bibitem{multiprocessing}
Python Software Foundation, ``Process-based parallelism," 2025. [Online]. Available: https://docs.python.org/3/library/multiprocessing.html. [Accessed: May 26, 2025].

\end{thebibliography}
\end{document}